\def\arXiv#1{arXiv:\href{http://arXiv.org/abs/#1}{#1}}
\newtheorem{theorem}{Theorem}
\newtheorem{proposition}[theorem]{Proposition}
\newtheorem{lemma}[theorem]{Lemma}
\newtheorem{corollary}[theorem]{Corollary}
\theoremstyle{definition}
\newtheorem{definition}[theorem]{Definition}
\theoremstyle{remark}
\newcommand{\RR}{\mathbb{R}}
\newcommand{\FF}{\mathbb{F}}
\newcommand{\C}{\mathcal{C}}
\newcommand{\va}{\mathbf{a}}
\newcommand{\vb}{\mathbf{b}}
\newcommand{\vc}{\mathbf{c}}
\newcommand{\vf}{\mathbf{f}}
\newcommand{\vh}{\mathbf{h}}
\newcommand{\diff}{\Delta}  
\newcommand{\abs}[1]{\left\lvert #1 \right\rvert}
\newcommand{\paren}[1]{\left( #1 \right)}
\newcommand{\set}[1]{\left\{ #1 \right\}}
\begin{document}

\title{Energy-minimizing error-correcting codes}

\author{Henry Cohn and Yufei Zhao%
\thanks{Henry Cohn is with Microsoft Research New England, One Memorial Drive, Cambridge, MA 02142, USA (e-mail:
cohn@microsoft.com).}%
\thanks{Yufei Zhao is with the Department of Mathematics, Massachusetts Institute of
Technology, Cambridge, MA 02139, USA (e-mail: yufeiz@mit.edu).}%
\thanks{Zhao was supported by an internship at Microsoft Research
New England.}}

\maketitle

\begin{abstract}
We study a discrete model of repelling particles, and we show using linear
programming bounds that many familiar families of error-correcting codes
minimize a broad class of potential energies when compared with all other
codes of the same size and block length. Examples of these universally
optimal codes include Hamming, Golay, and Reed-Solomon codes, among many
others, and this helps explain their robustness as the channel model
varies.  Universal optimality of these codes is equivalent to minimality of
their binomial moments, which has been proved in many cases by Ashikhmin
and Barg.  We highlight connections with mathematical physics and the
analogy between these results and previous work by Cohn and Kumar in the
continuous setting, and we develop a framework for optimizing the linear
programming bounds. Furthermore, we show that if these bounds prove a code
is universally optimal, then the code remains universally optimal even if
one codeword is removed.
\end{abstract}

\begin{IEEEkeywords}
Error correction codes, Combinatorial mathematics.
\end{IEEEkeywords}

\section{Introduction} \label{sec:intro}

Analogies between discrete and continuous packing problems have long played a
key role in coding theory.  In this paper, we extend these analogies to
encompass discrete models of physics, by showing that certain classical codes
are ground states of natural physics models. In fact, they are ground states
of many different models simultaneously.  We call this phenomenon
\emph{universal optimality}, motivated by \cite{CK07}.

As we will explain after Lemma~\ref{lem:cm-basis}, a code is universally
optimal if and only if all the binomial moments of its distance distribution
are minimal.  This problem has been studied by Ashikhmin and Barg
\cite{AB99}, with a very different combinatorial motivation (namely, counting
pairs of codewords in subcodes with restricted support), and they gave some
important examples, such as Hamming, Golay, and Reed-Solomon codes. Thus,
universal optimality is not a new property. However, the physics motivation
appears to be new, and we provide new proof techniques. We also prove strong
structural results about these codes, including our most surprising theorem:
if the linear programming bounds prove a code is universally optimal, then it
remains universally optimal if any single codeword is removed.

Let $\FF_q$ denote an alphabet with $q$ elements, and let $|x-y|$ denote the
Hamming distance between words $x,y \in \FF_q^n$.  Of course, this notation
suggests that $\FF_q$ is a finite field, but we will make no use of the field
structure.

We view $\FF_q^n$ as a discrete model of the universe, and we envision a code
in $\FF_q^n$ as specifying the locations of some particles.  To separate
these particles from each other, we will let them repel each other.
Specifically, we will choose a pairwise potential function between the
particles, and then we will study the \emph{ground states} of this system,
i.e., the particle arrangements that minimize the total energy.

Given a code $\C \subseteq \FF_q^n$ and a function $f \colon \{1,2, \dots,
n\} \to \RR$, the \emph{potential energy} of $\C$ with respect to the
\emph{potential function} $f$ is defined to be
\[
E_f(\C) = \frac{1}{|\C|}\sum_{\substack{x, y \in \C \\ x \neq y}}
f\paren{\abs{x-y}}.
\]
The normalization factor of $1/|\C|$ is convenient but not essential.

Repulsive forces correspond to decreasing potential functions, and we wish
the repulsion to grow stronger as the points grow closer together. The
completely monotonic functions extend these properties in a particularly
compelling way. Let $\diff$ be the finite difference operator, defined by
$\diff f (n) = f(n+1) - f(n).$ A function $f \colon \{a, a+1, \dotsc, b\} \to
\RR$ is \emph{completely monotonic} if its iterated differences alternate in
sign via $(-1)^k \diff^k f \ge 0$; more precisely, $(-1)^k\diff^k f(i) \geq
0$ whenever $k\geq 0$ and $a \leq i \leq b-k$.

For example, the inverse power laws $f(r) = r^{-\alpha}$ with $\alpha>0$ are
completely monotonic.  To see why, note that their derivatives obviously
alternate in sign, and then the mean value theorem implies that the same is
true for finite differences.

\begin{definition}
A code $\C \subseteq \FF_q^n$ is \emph{universally optimal} if
\[
E_f(\C) \leq E_f(\C')
\]
for every $\C' \subseteq \FF_q^n$ with $\abs{\C'} = \abs{\C}$ and all
completely monotonic $f \colon \{1, \dotsc, n\} \to \RR$.
\end{definition}

Every universally optimal code $\C$ maximizes the minimal distance between
codewords given its size $\abs{\C}$, because it minimizes the energy under $r
\mapsto r^{-\alpha}$ as $\alpha \to \infty$.  However, universal optimality
is a far stronger condition than that.

The definition of universal optimality is analogous to that of Cohn and Kumar
\cite{CK07} in the continuous setting.  They studied particle arrangements in
spheres or projective spaces and showed that many beautiful configurations
are universally optimal, including the icosahedron, the $E_8$ root system,
and the minimal vectors in the Leech lattice.  More generally, universal
optimality helps explain the occurrence of certain remarkable symmetry groups
in discrete mathematics and physics \cite{C10}.

Bouman, Draisma, and van Leeuwaarden \cite{BDL13} have independently studied
energy minimization models on toric grids under the Lee metric. Their main
theorem implies universal optimality for certain checkerboard arrangements of
particles filling half of the grid, but they do not investigate other codes.

Universally optimal codes have robust energy minimization properties, which
translate into good performance according to a broad range of measures.  For
example, they minimize the probability of an undetected error under the
$q$-ary symmetric channel, provided that each symbol is more likely to remain
the same than to become any other fixed symbol.  (See Section~V of
\cite{AB99}.)

For another application, consider maximum-likelihood decoding for a
binary-input discrete memoryless channel.  The exact error probability for
decoding is subtle, but for relatively low-rate codes it is frequently
estimated using a union bound (see Theorem~7.5 in \cite[p.~153]{M04}). This
bound shows that the error probability for a random codeword from a code $\C$
is at most $E_f(\C)$ with $f(r) = \gamma^r$, where $\gamma$ is the
Bhattacharyya parameter.  Because $\gamma \le 1$, the potential function $f$
is completely monotonic, and thus a universally optimal code must minimize
this upper bound for the decoding error.  It does not necessarily minimize
the true decoding error \cite{HKL04}, but minimizing a useful upper bound is
nearly as good.

Optimality is by no means limited to this particular union bound. For
example, the same holds true for the AWGN channel with antipodal signaling.
(Verifying complete monotonicity for the potential function requires a brief
inductive proof, but it is not difficult.) This explains the observations of
Ferrari and Chugg \cite{FC03}, who used linear programming bounds to verify
that certain Hamming and Golay codes minimize this bound for a wide range of
signal-to-noise ratios. Our results prove that this always works and show how
to generalize it to other codes.

We will prove that all the codes listed in Table~\ref{tab:codes} are
universally optimal. (See the longer version \arXiv{1212.1913v1} of this
paper for a review of the definitions of these codes, as well as other
background and discussion removed for lack of space.) For the Hamming,
Hadamard, Golay, MDS, and Nordstrom-Robinson codes, universal optimality is a
theorem of Ashikhmin and Barg \cite{AB99}, as mentioned above.

\begin{table*}
\centering
\begin{minipage}[b]{0.75\textwidth}
\caption{LP universally optimal codes.} \label{tab:codes}

We write $a \to b$ to mean $a, a+1, a+2, \dots, b$ and $a \stackrel{2}{\to}
b$ to mean $a, a+2, a+4, \dots, b$. The justification numbers in square
brackets tell which lemmas or propositions imply LP universal optimality;
when there is no such number, the proof is by directly solving the linear
programs.

\bigskip

\centering
\begin{tabular}{lcccccc}

\toprule

Name [justification] & $q$ & $n$& $N$ & Support $\subseteq$ & Dual support $\subseteq$\\

\midrule

Binary Hamming [\ref{prop:quasicode-duality}, \ref{prop:UO-1d}] & $2$ &
$2^r-1$ & $2^{2^r-r-1}$ & $\scriptstyle
\set{3\to n-3, n}$ & $\scriptstyle \set{\frac{n+1}{2}}$\\

-- extended [\ref{prop:quasicode-duality}, \ref{prop:UO-design-cover}] & $2$
& $2^r$  & $2^{2^r-r-1}$ & $\scriptstyle \set{4
\stackrel{2}{\to} n-4}$ & $\scriptstyle \set{\frac{n}{2},n}$\\

-- even subcode [\ref{prop:quasicode-duality}, \ref{prop:UO-design-cover}] &
$2$ & $2^r -1$ & $2^{2^r-r-2}$ & $\scriptstyle \set{4 \stackrel{2}{\to} n-3}$
& $\scriptstyle \set{\frac{n-1}{2},
\frac{n+1}{2}, n}$\\

-- shortened [\ref{prop:quasicode-duality}, \ref{prop:UO-design-cover}] & $2$
& $2^{r} - 2$ & $2^{2^r-r-2}$ & $\scriptstyle \set{3
\to n-2}$ & $\scriptstyle \set{\frac{n}{2}, \frac{n}{2} + 1}$\\

-- $2\times$ shortened [\ref{prop:quasicode-duality}, \ref{prop:UO-3sup}] &
$2$ & $2^r-3$ & $2^{2^r-r-3}$ & $\scriptstyle \set{3 \to  n-1}$ &
$\scriptstyle\set{\frac{n-1}{2}, \frac{n+1}{2},
\frac{n+3}{2}}$\\

-- punctured [\ref{prop:quasicode-duality}, \ref{prop:UO-1d}] & $2$ & $2^{r}
- 2$ & $2^{2^r-r-1}$ & $\scriptstyle \set{2
\to n-2,n}$ & $\scriptstyle \set{\frac{n}{2}+1}$\\

$q$-ary Hamming [\ref{prop:quasicode-duality}, \ref{prop:UO-1d}] & $q$ &
$\frac{q^r - 1}{q-1}$ & $q^{n-r}$ &
$\scriptstyle \set{3 \to n}$ & $\scriptstyle \set{ q^{r-1}}$\\

-- shortened [\ref{prop:quasicode-duality}, \ref{prop:UO-1d}] & $q$ &
$\frac{q^r - q}{q-1}$ & $q^{n-r}$ & $\scriptstyle
\set{3 \to n}$ & $\scriptstyle \set{q^{r-1}-1, q^{r-1}}$\\

-- punctured [\ref{prop:quasicode-duality}, \ref{prop:UO-1d}] & $q$ &
$\frac{q^r - q}{q-1}$ & $q^{n-r+1}$ &
$\scriptstyle \set{2 \to  n}$ & $\scriptstyle \set{q^{r-1}}$\\

\midrule

Simplex (1-design) [\ref{prop:UO-1d}] & $q$& $n$  & $N$ &  $\scriptstyle
\set{ a}$ &
$\scriptstyle \set{2 \to n}$\\

-- punctured [\ref{prop:UO-1d}] & $q$& $n-1$  & $N$ &  $\scriptstyle \set{
a-1, a}$ &
$\scriptstyle \set{2 \to n-1}$\\

\midrule

Hadamard [\ref{prop:UO-design-cover}] & $2$ & $4k$  & $2n$ & $\scriptstyle
\set{\frac{n}{2}, n}$ &
$\scriptstyle \set{4 \stackrel{2}{\to} n-4}$\\

-- punctured [\ref{prop:UO-design-cover}] & $2$ & $4k - 1$ & $2n + 2$ &
$\scriptstyle \set{\frac{n-1}{2}, \frac{n+1}{2}, n}$ & $\scriptstyle \set{4
\stackrel{2}{\to} n-3}$\\

Conference [\ref{prop:UO-design-cover}] & $2$  & $4k + 1$ & $2n + 2$ &
$\scriptstyle \set{\frac{n-1}{2}, \frac{n+1}{2}, n}$ & $\scriptstyle \set{4
\stackrel{2}{\to} n-1}$\\

\midrule

Binary Golay [\ref{prop:quasicode-duality}, \ref{prop:UO-design-spread}]  &
$2$ & $23$  & $2^{12}$ & $\scriptstyle
\set{7,8,11,12,15,16,23}$ & $\scriptstyle \set{8,12,16}$\\

-- extended [\ref{prop:UO-design-spread}] & $2$ & $24$ & $2^{12}$ &
$\scriptstyle
 \set{8,12,16,24}$ &  $\scriptstyle \set{8,12,16,24}$\\

-- punctured [\ref{prop:quasicode-duality}, \ref{prop:UO-design-spread}] &
$2$ & $22$ &  $2^{12}$ & $\scriptstyle \set{6
 \to 16,22} \setminus \set{9,13}$ & $\scriptstyle
 \set{8,12,16}$\\

-- shortened & $2$ & $22$ & $2^{11}$ & $\scriptstyle
 \set{7,8,11,12,15,16}$ & $\scriptstyle
 \set{7,8,11,12,15,16}$\\

-- $2\times$ shortened & $2$ & $21$ & $2^{10}$ & $\scriptstyle
\set{7,8,11,12,15,16}$ & $\scriptstyle \set{6 \to 16} \setminus
\set{9,13}$\\

-- {punctured and} & $2$ & $20$ & $2^{10}$ & $\scriptstyle \set{6 \to 16}
\setminus \set{9,13}$ &  $\scriptstyle \set{6 \to 16} \setminus
\set{9,13}$\\
\phantom{--} $2\times$ shortened\\

Ternary Golay [\ref{prop:quasicode-duality}, \ref{prop:UO-design-spread}] &
$3$  & $11$ & $3^6$ & $\scriptstyle \set{5,6,8,9,11}$
& $\scriptstyle \set{6,9}$\\

-- extended [\ref{prop:UO-design-spread}] & $3$ & $12$  & $3^6$ &
$\scriptstyle \set{6,9,12}$ &
$\scriptstyle \set{6,9,12}$\\

-- shortened & $3$ & $10$ & $3^5$ & $\scriptstyle \set{5, 6,
8, 9}$ & $\scriptstyle \set{5, 6, 8, 9}$ \\

-- $2\times$ shortened [\ref{prop:UO-design-cover}] & $3$ & $9$ & $3^4$ &
$\scriptstyle \set{5, 6,
8, 9}$ & $\scriptstyle \set{4 \to  9}$\\

-- $3\times$ shortened [\ref{prop:UO-design-cover}] & $3$ & $8$ & $3^3$ &
$\scriptstyle \set{5, 6,
8}$ & $\scriptstyle \set{3 \to 8}$\\

-- $4\times$ shortened [\ref{prop:UO-1d}] & $3$ & $7$ & $3^2$ & $\scriptstyle
\set{5, 6}$
& $\scriptstyle \set{2 \to 7}$\\

-- punctured [\ref{prop:quasicode-duality}, \ref{prop:UO-design-spread}] &
$3$ & $10$ & $3^6$ & $\scriptstyle \set{ 4 \to 10}$ &
$\scriptstyle \set{6, 9}$ &\\

-- $2\times$ punctured [\ref{prop:quasicode-duality},
\ref{prop:UO-design-spread}] & $3$ & $9$ & $3^6$ & $\scriptstyle \set{3 \to
9}$ & $\scriptstyle \set{6, 9}$\\

-- $3\times$ punctured [\ref{prop:quasicode-duality}, \ref{prop:UO-1d}] & $3$
& $8$ & $3^6$ & $\scriptstyle \set{2 \to
8}$ & $\scriptstyle \set{6}$\\

\midrule

MDS [\ref{lem:pd-mds}, \ref{prop:UO-design-pd}]& $q$ &  $n$   & $q^{n-d+1}$ &
$\scriptstyle \set{d \to n}$ &
$\scriptstyle \set{n-d+2 \to  n}$\\

\midrule

Ovoid ($q>2$) [\ref{prop:UO-design-cover}] & $q$ & $q^2 + 1$ & $q^4$ &
$\scriptstyle \set{ q^2 - q,
q^2}$ & $\scriptstyle \set{4 \to n}$\\

-- shortened [\ref{prop:UO-design-cover}] & $q$ & $q^2$ & $q^3$ &
$\scriptstyle \set{q^2 - q, q^2}$
& $\scriptstyle \set{3 \to n}$\\

-- $2\times$ shortened [\ref{prop:UO-1d}] & $q$ & $q^2 - 1$ & $q^2$ &
$\scriptstyle
\set{q^2 - q}$ & $\scriptstyle \set{2 \to n}$\\

-- punctured [\ref{prop:UO-design-cover}] & $q$ & $q^2$ & $q^4$ &
\scalebox{0.8}{$\scriptstyle \scriptstyle \set{q^2-q-1, q^2-q, q^2 -1, q^2}$}
& $\scriptstyle \set{4
\to n}$\\

\midrule

Nordstrom-Robinson & $2$ & $16$  & $256$ & $\scriptstyle
\set{6,8,10,16}$ & $\scriptstyle \set{6,8,10,16}$\\

-- punctured & $2$  & $15$ & $256$ & $\scriptstyle \set{5 \to 10,15}$ &
$\scriptstyle \set{6,8,10}$\\

-- shortened  & $2$ &  $15$ & $128$ & $\scriptstyle
\set{6,8,10}$ & $\scriptstyle \set{5 \to 10,15}$ \\

-- $2\times$ shortened & $2$ & $14$ & $56$ & $\scriptstyle
\set{6,8,10}$ & $\scriptstyle \set{4 \to 10,14}$ \\

\bottomrule
\end{tabular}
\end{minipage}
\end{table*}

Universally optimal codes are common for short block lengths, but they become
increasingly rare for long block lengths. Brute force searches show that
there is a unique universally optimal binary code of size $N$ and block
length $n$ (up to translation and permutation of the coordinates) whenever $n
\le 4$ and $1 \le N \le 2^n$. For $n=5$, such a code exists if and only if $N
\not\in \{9, 12, 13, 14, 18, 19, 20, 23\}$, and it is unique except when
$N=5$ or $N=27$, in which case there are two isomorphism classes (see
Section~\ref{sec:further} for an explanation of the $N \leftrightarrow 32-N$
symmetry).  Thus, a universal optimum need not exist or be unique if it does
exist.

Our main technical tool for bounding energy is the linear program developed
by Delsarte \cite{Del72}, which was originally used to bound the size of
codes given their minimum distance and was applied to energy minimization and
related problems by Yudin \cite{Y92} and by Ashikhmin, Barg, and Litsyn
\cite{AB99,ABL01}. We will call a code \emph{LP universally optimal} if its
universal optimality follows from these bounds, as occurs for all the cases
in Table~\ref{tab:codes}.

Our most surprising theorem is that LP universally optimal codes continue to
minimize energy even after we remove a single codeword.  We know of no
continuous analogue of this property.  Furthermore, such codes are distance
regular (for each distance, every codeword has the same number of codewords
at that distance).

\begin{theorem} \label{thm:removept-intro}
Every LP universally optimal code is distance regular, and it remains
universally optimal when any single codeword is removed.
\end{theorem}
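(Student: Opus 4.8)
The plan is to extract everything from the LP certificate guaranteed by universal optimality and then split the theorem into the (easy) distance-regularity part and the (hard) deletion part. So suppose $\C\subseteq\FF_q^n$ with $\abs{\C}=N$ is LP universally optimal, fix a completely monotonic $f$, and let $h$ be the associated auxiliary function: $h\le f$ on $\set{1,\dots,n}$, the Krawtchouk (MacWilliams) coefficients of $h$ are nonnegative away from the constant term, $h$ agrees with $f$ on every distance actually realized in $\C$, and the complementary slackness making the Delsarte bound tight at $\C$ holds. The one structural consequence I want to read off is that the kernel matrix $H=\paren{h(\abs{u-v})}_{u,v\in\FF_q^n}$ sends $\mathbf 1_\C$ to a \emph{constant} vector: each eigenspace of the Hamming scheme on which $\mathbf 1_\C$ has a nonzero component is exactly one where the corresponding coefficient of $h$ vanishes (that is the slackness condition), so $H\mathbf 1_\C$ is supported on the all-ones eigenspace. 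Concretely, $\sum_{y\in\C}h(\abs{z-y})$ is the same for every $z\in\FF_q^n$.

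For distance regularity I would evaluate this constant field at a codeword $x\in\C$. Every distance $\abs{x-y}$ with $y\in\C$ is realized, so $h$ may be replaced by $f$, and subtracting the self-term shows that the potential $\sum_{y\in\C,\,y\neq x} f(\abs{x-y})$ does not depend on $x$; averaging over $x$ identifies this common value as $E_f(\C)$. Thus $\sum_i A_i(x) f(i)=E_f(\C)$ for every codeword $x$ and every completely monotonic $f$, where $A_i(x)$ is the number of codewords at distance $i$ from $x$. Since the completely monotonic functions span all of $\RR^{\set{1,\dots,n}}$ (Lemma~\ref{lem:cm-basis}), the counts $A_i(x)$ are independent of $x$, which is precisely distance regularity. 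A bonus of this computation is that the potential at each codeword equals $E_f(\C)$, so deleting any single codeword removes exactly $2E_f(\C)$ from the total, and every $\C\setminus\set{x}$ has the same energy $E_f(\C\setminus\set{x})=\tfrac{N-2}{N-1}E_f(\C)$.

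For the deletion statement I would fix a competitor $\D\subseteq\FF_q^n$ with $\abs{\D}=N-1$ and reduce to a statement about adjoining one point. For any $y\notin\D$ we have $N\,E_f(\D\cup\set{y})=(N-1)E_f(\D)+2\sum_{u\in\D}f(\abs{y-u})$, and universal optimality of $\C$ gives $E_f(\D\cup\set{y})\ge E_f(\C)$; optimizing the choice of $y$ yields
\[
E_f(\D)\ge\frac{N\,E_f(\C)-2\min_{y\notin\D}\sum_{u\in\D}f(\abs{y-u})}{N-1}.
\]
Comparing with the value $\tfrac{N-2}{N-1}E_f(\C)$ from the previous paragraph, it suffices to produce a single point $y\notin\D$ whose potential $\sum_{u\in\D}f(\abs{y-u})$ is at most $E_f(\C)$; that is the entire content of the deletion theorem.

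I expect this last step to be the main obstacle, and it is genuinely combinatorial rather than LP-theoretic. The natural soft estimate uses the constant-field identity for $\C$ again: averaging $\sum_{u\in\D}h(\abs{y-u})$ over $y$ gives $\hat h_0(N-1)/q^n$, so some $y$ has $h$-potential at most this value. Unfortunately this exceeds the target $E_f(\C)$ by exactly the ``point-mass'' term $\delta=H_{xx}-\hat h_0/q^n>0$, and because $h\le f$ the inequality only bounds potentials \emph{from below}, so it cannot be pushed down to $f$. This is not an artifact of a bad estimate: the two-point LP bound can never be tight for $\C\setminus\set{x}$, since deleting a point makes the dual distribution nonzero in precisely the directions where $h$ has positive coefficients, so no auxiliary function certifies $\C\setminus\set{x}$ directly. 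To close the $\delta$-gap I would take $\D$ to be an energy-minimal code of size $N-1$, adjoin its lowest-potential point to form a size-$N$ code $\C'$, and play the distance-regular deletion identity off against minimality of $\D$ and universal optimality of $\C$, using the integrality of a genuine code (rather than a fractional LP solution) to rule out competitors that simultaneously imitate the design property $H\mathbf 1_\C=\text{const}$ and use only the distances realized by $\C$. Making this rigidity argument precise is where the real difficulty lies.
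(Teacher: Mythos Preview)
Your distance-regularity argument is correct and in fact cleaner than the paper's route: the paper deduces distance regularity only \emph{after} establishing the deletion statement (by observing that all the codes $\C\setminus\{c\}$ must share the same energy for every potential, hence the same distance distribution), whereas you read it off directly from the constant-field identity $H\mathbf{1}_\C=\text{const}$.

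The deletion half, however, has a genuine gap, which you yourself flag: the ``adjoin a point'' reduction is sound, but producing $y\notin\D$ with potential at most $E_f(\C)$ is the whole difficulty, and your rigidity sketch does not close the $\delta$-gap. Contrary to your expectation that the obstacle is ``genuinely combinatorial rather than LP-theoretic,'' the paper's resolution is pure linear programming---just with a \emph{sharper} LP. The missing ingredient is the Ashikhmin--Simonis strengthening of the Delsarte inequalities: whenever $q\nmid|\D|$, one has $|\D|\sum_i B_i K_j(i)\ge(q-1)^j\binom{n}{j}$ for every $j$, not merely $\ge 0$. The paper first argues that an LP universally optimal code must have $q\mid N$ (otherwise the dual distribution is strictly positive everywhere, forcing the auxiliary $h$ to be constant), so $q\nmid N-1$. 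Then comes the decisive observation: under the substitution $B_i=\tfrac{N-2}{N-1}A_i$ for $i\ge1$ (the expected distance distribution after deleting a uniformly random codeword), the Delsarte inequalities on $(A_i)$ with $\sum_i A_i=N$ become \emph{exactly} the Ashikhmin--Simonis inequalities on $(B_i)$ with $\sum_i B_i=N-1$. Optimality of $(A_i)$ in the size-$N$ Delsarte LP therefore transfers verbatim to optimality of $(B_i)$ in the size-$(N-1)$ Ashikhmin--Simonis LP, and since every genuine code of size $N-1$ satisfies those stronger constraints, none can beat the average energy $\tfrac{N-2}{N-1}E_f(\C)$; linearity of expectation then forces every individual $\C\setminus\{c\}$ to attain it. Your instinct that the ordinary Delsarte LP cannot certify $\C\setminus\{c\}$ was correct; the fix is not to abandon LP but to swap in one that can.
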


Removing a codeword yields a universal optimum, but the resulting code will
generally not be LP universally optimal.  Thus, this process cannot be
iterated.

\section{Linear programming bounds} \label{sec:LP}

We begin by formulating the linear programming bound for energy minimization.
Suppose $\C \subseteq \FF_q^n$. The Delsarte inequalities constrain the
\emph{distance distribution} $(A_0, A_1, \dots, A_n)$ of $\C$, where
\[
A_i = \frac{1}{\abs{\C}} \abs{\set{(x,y) \in \C^2 : \abs{x-y} = i}}
\quad \text{for $i = 0, 1, \dots, n$}.
\]
Specifically, let $K_k$ denote
the $k$-th Krawtchouk polynomial, defined by
\begin{equation}
\label{eq:krawdef}
\begin{split}
K_k(x) & = K_k(x;n,q)\\
& = \sum_{j=0}^k
(-1)^j(q-1)^{k-j}\binom{x}{j}\binom{n-x}{k-j}.
\end{split}
\end{equation}
Krawtchouk polynomials are orthogonal polynomials with respect to the
binomial distribution $\operatorname{Binom}(n;(q-1)/q)$. In other words,
\begin{equation}\label{eq:kraw-orthog}
\frac{1}{q^n} \sum_{i=0}^n \binom{n}{i} (q-1)^i K_j(i) K_k(i) = \binom{n}{j} (q-1)^j \delta_{jk}.
\end{equation}

The Delsarte inequalities are
\[
\sum_{i=0}^n A_i K_j(i) \ge 0
\]
for $j = 0, 1, \dots, n$ (see Theorem~3 in \cite{DL98}). Thus, the following
linear program in the variables $A_0,A_1,\dots,A_n$ gives a lower bound for
$E_f(\C)$ when $\abs{\C}=N$:
\begin{equation}\label{eq:LP}
  \begin{split}
    \text{minimize \ }\qquad   \sum_{i=1}^n A_if(i) &  \\
    \text{subject to}\qquad  \sum_{i=0}^n A_i K_j(i) &\geq 0 \qquad  \text{for $j = 1, 2, \dotsc, n$}, \\
    A_0 + A_1 + \dotsb + A_n &= N, \\
    A_0 &= 1, \\
    A_i &\geq 0 \qquad \text{for $i = 1, 2, \dots, n$}.
  \end{split}
\end{equation}

\begin{definition}
A code $\C \subseteq \FF_q^n$ is \emph{LP universally optimal} if its
distance distribution $(A_0, \dots, A_n)$ optimizes \eqref{eq:LP} for every
completely monotonic potential function $f$.
\end{definition}

Ashikhmin and Barg \cite{AB99} call LP universally optimal codes
\emph{extremal codes}.

For any fixed code, checking whether it is universally optimal or LP
universally optimal is a finite problem, since we can write down a basis for
the cone of completely monotonic functions as follows.

\begin{lemma} \label{lem:cm-basis}
The complete monotonic functions on $\set{0,1,\dots,n}$ are the nonnegative
span of the \emph{fundamental potential functions} $f_0, f_1, \dots, f_n$
defined by $f_j (x) = \binom{n-x}{j}$.
\end{lemma}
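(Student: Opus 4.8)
The plan is to prove the two inclusions separately: first that every fundamental potential function $f_j$ is completely monotonic (so the cone contains their nonnegative span), and then that every completely monotonic function lies in that span with manifestly nonnegative coefficients.

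For the first inclusion I would compute the iterated differences of $f_j$ directly. Writing $f_j(x) = \binom{n-x}{j}$ and applying Pascal's identity gives $\diff f_j(x) = \binom{n-x-1}{j} - \binom{n-x}{j} = -\binom{n-x-1}{j-1}$, and a routine induction then yields $\diff^k f_j(x) = (-1)^k \binom{n-x-k}{j-k}$. Hence $(-1)^k \diff^k f_j(x) = \binom{n-x-k}{j-k} \ge 0$ for all $k \ge 0$ and $0 \le x \le n-k$ (using the convention that a binomial coefficient vanishes when its lower index is negative), so each $f_j$ is completely monotonic. Since the defining inequalities $(-1)^k \diff^k f \ge 0$ are linear in $f$, the completely monotonic functions form a convex cone, and therefore this cone contains every nonnegative combination of the $f_j$.

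For the reverse inclusion I first note that $f_j$ is a polynomial of degree $j$ in $x$, so $f_0, \dots, f_n$ have distinct degrees and thus form a basis of the $(n+1)$-dimensional space of all functions on $\set{0,1,\dots,n}$. Consequently an arbitrary $f$ has a unique expansion $f = \sum_{j=0}^n c_j f_j$, and it remains to show that complete monotonicity forces every $c_j \ge 0$. The key is to extract the coefficients by sampling iterated differences at the right endpoint: from the formula above, $\diff^k f_j(n-k) = (-1)^k \binom{0}{j-k} = (-1)^k \delta_{jk}$, so $(-1)^k \diff^k f_j(n-k) = \delta_{jk}$. Applying the linear operator $(-1)^k \diff^k$ to the expansion and evaluating at $x = n-k$ therefore gives the explicit formula $c_k = (-1)^k \diff^k f(n-k)$. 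Because $n-k$ lies in the range $0 \le i \le n-k$ on which complete monotonicity is asserted, each $c_k \ge 0$, which completes the argument.

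The only genuinely clever step is the choice of evaluation point: recognizing that $x = n-k$ makes $(-1)^k \diff^k f_j(n-k)$ collapse to the Kronecker delta $\delta_{jk}$, which diagonalizes the system and reads off $c_k$ directly. Everything else is routine binomial bookkeeping, and the main care required is keeping the conventions for out-of-range binomial coefficients consistent; the identity $\binom{0}{j-k} = \delta_{jk}$ is exactly what forces the endpoint evaluation to isolate a single coefficient. I expect no real obstacle beyond getting these conventions straight.
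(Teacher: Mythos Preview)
Your proof is correct and essentially the same as the paper's: both extract the coefficients $c_k$ as $(-1)^k\diff^k f(n-k)$, which is nonnegative by complete monotonicity. The only cosmetic difference is that the paper first substitutes $x\mapsto n-x$ to pass to absolutely monotonic functions and $g_j(x)=\binom{x}{j}$, then reads off $c_j=\diff^j g(0)$ via the discrete Taylor expansion, whereas you work directly with the $f_j$ and evaluate at $x=n-k$; these are the same computation under the change of variables.
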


The potential energy with respect to $f_j$ is exactly the $j$-th
\emph{binomial moment} of the distance distribution, as defined by Ashikhmin
and Barg \cite{AB99}.  Thus, Lemma~\ref{lem:cm-basis} shows that a code is
universally optimal if and only if its binomial moments are minimal, so the
results of \cite{AB99} can be restated in terms of universal optimality.

Lemma~\ref{lem:cm-basis} includes $0$ in the domain of $f$, which will be
notationally convenient in Section~\ref{sec:quasicodes}, but we can always
extend $f$ from $\set{1,2,\dots,n}$ to $\set{0, 1, \dots, n}$ by setting
$f(0)$ to be a sufficiently large value that complete monotonicity continues
to hold.

We say that a function $f \colon \{a, a+1, \dotsc, b\} \to \RR$ is
\emph{absolutely monotonic} if all its finite differences are nonnegative;
i.e., $\diff^k f(i) \geq 0$ whenever $k \ge 0$ and $a \leq i \leq b-k$.

\begin{proof}[Proof of Lemma~\ref{lem:cm-basis}]
By changing $x$ to $n-x$, it suffices to prove that the functions $g_j(x) =
\binom{x}{j}$ span the cone of absolutely monotonic functions. Indeed,
$\diff^r g_j(x) = g_{j-r}(x)$ for $r \leq j$, and $\diff^r g_j(x) = 0$ for $r
> j$, so each $g_j$ is absolutely monotonic. Conversely, every function $g
\colon \set{0,1,\dots,n} \to \RR$ satisfies
\[
g(x) = \sum_{j=0}^n \binom{x}{j} \diff^j g(0) = \sum_{j=0}^n g_j(x) \diff^j g(0)
\]
by the discrete calculus analogue of the Taylor series expansion. If $g$ is
absolutely monotonic, then $\diff^j g(0) \geq 0$ for all $j$, as desired.
\end{proof}

Thus, checking whether a code of block length $n$ is LP universally optimal
amounts to solving $n$ linear programs (the $f_0$ case is trivial).  However,
checking whether a code is universally optimal seems far more difficult.

Linear programming duality transforms \eqref{eq:LP} into its dual as follows.
Here $c_0,\dots,c_n$ are the dual variables, and the equality conditions
follow from complementary slackness.

\begin{proposition} \label{prop:LP-bound}
Suppose $f \colon \{1, \dots, n\} \to \RR$ is any function, $h \colon \{0, 1,
\dots, n\} \to \RR$ satisfies
\[
h(i) \leq f(i) \qquad \text{for $i = 1, 2, \dots, n$},
\]
and there exist $c_0, c_1, \dots, c_n$ with $c_j \geq 0$ for $j \geq 1$ such
that
\[
h(i) = \sum_{j=0}^n c_j K_j(i) \qquad \text{for $i = 0, 1, \dots, n$}.
\]
Then every code $\C \subseteq \FF_q^n$ with $\abs{\C} = N$ has $f$-potential
energy at least $N c_0 - h(0)$.  Furthermore, equality holds if and only if
$h(i) = f(i)$ for all $i>0$ satisfying $A_i>0$ and $c_j=0$ for all $j>0$
satisfying $A_j^\perp > 0$, where $(A_i)$ is the distance distribution of
$\C$ and $(A^\perp_j)$ is the \emph{dual distance distribution} defined by
\begin{equation} \label{eq:dualdef}
A^\perp_j = \frac{1}{\abs{\C}} \sum_{i=0}^n A_i
K_j(i).
\end{equation}
\end{proposition}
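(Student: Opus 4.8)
The plan is to run the standard weak-duality argument underlying Delsarte's bound, keeping careful track of where each inequality is used so that the equality conditions drop out by complementary slackness. Throughout I write $(A_i)$ for the distance distribution of $\C$, so that $A_0 = 1$, $\sum_{i=0}^n A_i = N$, and, by collecting codeword pairs according to their distance, $E_f(\C) = \sum_{i=1}^n A_i f(i)$.

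First I would bound the energy below by replacing $f$ with $h$. Since $A_i \ge 0$ and $h(i) \le f(i)$ for $i \ge 1$, we get $E_f(\C) = \sum_{i=1}^n A_i f(i) \ge \sum_{i=1}^n A_i h(i)$. The hypothesis $h \le f$ is assumed only on $\set{1, \dots, n}$, so it matters that this sum starts at $i = 1$; to bring in the Krawtchouk expansion, which lives on $\set{0,1,\dots,n}$, I would rewrite the right-hand side as $\sum_{i=0}^n A_i h(i) - A_0 h(0) = \sum_{i=0}^n A_i h(i) - h(0)$, using $A_0 = 1$. This asymmetry between the domains of $f$ and $h$ is the one point that needs care: the $i=0$ term must be peeled off \emph{before} substituting the expansion, not after.

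Next I would substitute $h(i) = \sum_{j=0}^n c_j K_j(i)$ and interchange the order of summation. By the definition \eqref{eq:dualdef} of the dual distance distribution, $\sum_{i=0}^n A_i K_j(i) = N A_j^\perp$, so $\sum_{i=0}^n A_i h(i) = N \sum_{j=0}^n c_j A_j^\perp$. The key observations are that $K_0 \equiv 1$ forces $A_0^\perp = \tfrac{1}{N} \sum_{i=0}^n A_i = 1$, isolating the term $N c_0$, and that for $j \ge 1$ the Delsarte inequalities give $A_j^\perp \ge 0$ while $c_j \ge 0$ by hypothesis. Dropping these nonnegative $j \ge 1$ terms yields $\sum_{i=0}^n A_i h(i) \ge N c_0$, and chaining everything together gives $E_f(\C) \ge N c_0 - h(0)$, as claimed.

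For the equality statement I would simply identify the two places where the chain can fail to be tight. The step $E_f(\C) \ge \sum_{i=1}^n A_i h(i)$ is an equality exactly when $A_i \paren{f(i) - h(i)} = 0$ for every $i \ge 1$, i.e., $h(i) = f(i)$ whenever $A_i > 0$; and the step dropping the $j \ge 1$ terms is an equality exactly when $c_j A_j^\perp = 0$ for every $j \ge 1$, i.e., $c_j = 0$ whenever $A_j^\perp > 0$. Since every remaining step is an identity, overall equality holds precisely when both conditions hold, which is exactly the stated complementary-slackness criterion. There is no genuine obstacle in this argument; it is a direct instance of LP weak duality, and the only subtlety is the domain bookkeeping already noted.
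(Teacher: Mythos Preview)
Your argument is correct and is precisely the standard LP weak-duality computation the paper has in mind; the paper itself does not write out a proof but simply notes that the proposition is the dual of \eqref{eq:LP} with the equality conditions coming from complementary slackness. Your careful handling of the $i=0$ term and the identification of the two slackness conditions match exactly what is needed.
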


\section{Quasicodes and duality}\label{sec:quasicodes}

In this section we show that LP universal optimality is preserved under the
duality operation expressed by \eqref{eq:dualdef}.  Curiously, this symmetry
seems to have no analogue in the continuous setting of \cite{CK07}.

We use the term \emph{quasicode} for a feasible point in the Delsarte linear
program, equipped with a duality operator called the MacWilliams transform
\cite[p.~137]{MS}.

\begin{definition}
A \emph{quasicode} $\va$ of length $n$ and size $N$ over $\FF_q$ is a real
column vector $(A_0, A_1, \dots, A_n)$ satisfying the constraints of the
linear program \eqref{eq:LP}.  In other words,
\[
\va \geq 0, \qquad K \va \geq 0, \qquad  \sum_{i=0}^n A_i = N, \qquad\text{and} \qquad A_0 = 1.
\]
Here $K$ stands for the matrix $(K_i(j))_{0 \leq i,j \leq n}$, and $\va \geq
0$ means that all coordinates of $\va$ are nonnegative. We write $\abs{\va}$
for the \emph{size} $N$ of the quasicode. Based on \eqref{eq:dualdef}, the
\emph{dual} of $\va$ is defined to be the quasicode
\[
\va^\perp = \frac{1}{\abs{\va}} K \va.
\]
\end{definition}

To see that $\va^\perp$ is a quasicode, we can use the identity $K^2 = q^n I$
(see (11), (12), and (17) in \cite{DL98}).  (The reason is that $K$ is the
radial Fourier transform and $K^2=q^nI$ is Fourier inversion.) It follows
that $\abs{\va^\perp} \abs{\va} = q^n$ and $\va^{\perp\perp} = \va$. For
every code $\C \subseteq \FF_q^n$, its distance distribution is a quasicode
$\va$ with $\abs{\va} = \abs{\C}$. Furthermore, if $\C$ is a linear code,
then its dual linear code $\C^\perp$ has distance distribution $\va^\perp$.

We say that $\va$ is a \emph{$t$-design} if its dual $\va^\perp$ satisfies
$A^\perp_j = 0$ for $1 \leq j \leq t$. Using Krawtchouk polynomials as a
basis for polynomials of degree at most $t$, one can check that $\va$ is an
$t$-design if and only if every polynomial $f$ of degree at most $t$
satisfies
\begin{equation} \label{eq:design-avg}
\frac{1}{N} \sum_{i=0}^n A_i f(i) = \frac{1}{q^n} \sum_{i=0}^n \binom{n}{i}(q-1)^i f(i).
\end{equation}

Given a potential function $f \colon \set{0,1,\dots,n} \to \RR$, let $\vf$ be
the column vector $(f(0), f(1), \dots, f(n))$.  Minimizing the $f$-potential
energy of a quasicode $\va$ amounts to minimizing the inner product
\[
\vf^t \va = \sum_{i=0}^n f(i)A_i.
\]
This quantity differs from the earlier definition of energy by including
$f(0)$, but it does not affect the notion of universal optimality since $A_0
= 1$, independently of $\va$.

\begin{definition}
A quasicode $\va$ of length $n$ over $\FF_q$ \emph{minimizes $f$-potential
energy} if $\vf^t \va \leq \vf^t \vb$ for every quasicode $\vb$ of length $n$
over $\FF_q$ with $\abs{\va} = \abs{\vb}$. It is a \emph{universally optimal
quasicode} if it minimizes $f$-energy for every completely monotonic $f$.
\end{definition}

Note that a code is LP universally optimal if and only if its distance
distribution is a universally optimal quasicode.  Universally optimal
quasicodes often exist in low dimensions; for example, they exist for all $n
\le 11$ and $1 \le N \le 2^n$. Nevertheless, they do not always exist.  For
example, there are no universally optimal quasicodes for $n=12$, $q=2$, and
$24 < N < 40$.

Given a quasicode $(A_0, \dots, A_n)$ with dual $(A^\perp_0, \dots,
A^\perp_n)$, we call $\set{i > 0 : A_i \neq 0}$ the \emph{support} of the
quasicode, and $\set{i > 0 : A^\perp_i \neq 0}$ the \emph{dual support} of
the quasicode. Of course we apply the same definitions to actual codes.

Proposition~\ref{prop:LP-bound} also applies to quasicodes, because its proof
used only the Delsarte inequalities. Note that the conditions for equality do
not take into account the actual values of the quasicode, but only its
support and dual support:

\begin{proposition} \label{prop:support-info}
Whether a quasicode is universally optimal depends only on its length, size,
support, and dual support.
\end{proposition}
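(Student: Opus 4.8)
The plan is to extract, from Proposition~\ref{prop:LP-bound}, a certificate of universal optimality whose only dependence on the quasicode is through the data named in the statement---length, size, support, and dual support. Concretely, fix a length $n$, a size $N$, a support set $S \subseteq \set{1, \dots, n}$, and a dual support set $T \subseteq \set{1, \dots, n}$, and suppose $\va = (A_0, \dots, A_n)$ is a universally optimal quasicode with exactly this support and dual support. I want to show that every other quasicode $\vb$ with the same length $n$, size $N$, support $\subseteq$ something determined by $S$, and dual support determined by $T$ --- more precisely, every quasicode sharing these four pieces of data --- is also universally optimal. The key observation is that universal optimality is equivalent to minimizing $f$-energy for each fundamental potential function $f_0, \dots, f_n$ separately, by Lemma~\ref{lem:cm-basis}: since the $f_j$ nonnegatively span the completely monotonic cone and energy is linear in $f$, a quasicode minimizes every completely monotonic $f$ if and only if it minimizes each $f_j$.

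The heart of the argument is to reinterpret the equality conditions in Proposition~\ref{prop:LP-bound}. For each fundamental potential $f_j$, universal optimality of $\va$ means the LP bound is tight, so there exist dual data $h$ and $c_0, \dots, c_n$ (with $c_k \ge 0$ for $k \ge 1$) satisfying $h \le f_j$ pointwise on $\set{1, \dots, n}$, $h = \sum_k c_k K_k$, and the complementary slackness conditions: $h(i) = f_j(i)$ whenever $i > 0$ and $A_i > 0$, and $c_k = 0$ whenever $k > 0$ and $A_k^\perp > 0$. I would emphasize that these dual certificates $h$ and $(c_k)$ depend only on $f_j$ together with the index sets $\set{i : A_i > 0} = S$ and $\set{k : A_k^\perp > 0} = T$, and on $n$ --- not on the actual numerical entries of $\va$. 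Therefore, the very same $h$ and $(c_k)$ furnish a valid dual feasible certificate, with the same bound $N c_0 - h(0)$, and the first step is to check this bound is achieved. Here I would observe that the optimal value of the primal LP depends only on $n$, $N$, and $f_j$ (it is the same LP for all quasicodes of that length and size), so $\va$ and $\vb$ have the \emph{same} target value $N c_0 - h(0)$ to meet.

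Now comes the crux: I must verify that $\vb$ actually attains the bound, which requires that the equality conditions hold for $\vb$. This is where the hypothesis on support and dual support does its work. If $\vb$ has the same support $S$ and the same dual support $T$ as $\va$, then the conditions ``$h(i) = f_j(i)$ for all $i > 0$ with $B_i > 0$'' and ``$c_k = 0$ for all $k > 0$ with $B_k^\perp > 0$'' reduce to exactly the conditions already verified for $\va$, since $\set{i : B_i > 0} = S$ and $\set{k : B_k^\perp > 0} = T$. Hence the inequality $\vf_j^t \vb \ge N c_0 - h(0)$ becomes an equality, so $\vb$ minimizes $f_j$-energy. As this holds for every fundamental potential $f_j$, the quasicode $\vb$ is universally optimal. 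The main obstacle I anticipate is purely bookkeeping about the direction of the support constraints: a priori one might weaken the hypothesis to ``support $\subseteq S$ and dual support $\subseteq T$,'' but then the equality conditions could fail to be saturated by $\va$ itself, or could require $h(i) = f_j(i)$ at fewer points than certified. The cleanest statement is that optimality depends only on the exact support and dual support, and I would be careful to phrase the complementary-slackness matching as an ``if and only if'' so that the certificate transfers in both directions between $\va$ and $\vb$. Once that matching is set up correctly, the proof is essentially immediate from Proposition~\ref{prop:LP-bound} and Lemma~\ref{lem:cm-basis}.
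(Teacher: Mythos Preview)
Your proposal is correct and follows the same approach the paper takes: the paper's entire argument is the one-sentence observation preceding the proposition, namely that the equality (complementary slackness) conditions in Proposition~\ref{prop:LP-bound} involve only the support and dual support, so a dual certificate $(h,c)$ that makes the bound tight for one quasicode transfers verbatim to any quasicode with the same support and dual support. Your write-up simply unpacks this, using Lemma~\ref{lem:cm-basis} to reduce to the fundamental potentials and invoking LP strong duality for the existence of the certificate. One small remark: your worry at the end about needing \emph{exact} equality of supports rather than containment is overly cautious---as the paper notes just after the proposition, if $\vb$ has support and dual support \emph{contained} in those of $\va$, the same certificate still satisfies the complementary slackness conditions for $\vb$, so containment already suffices (and then uniqueness of the universally optimal quasicode forces $\vb = \va$).
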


A universally optimal quasicode is uniquely determined by its length and size
if it exists, because the energies with respect to the $n+1$ fundamental
potential functions put $n+1$ constraints on the quasicode, which are
linearly independent because there is one potential function of each degree.
Furthermore, if $\va$ is a universally optimal quasicode and $\vb$ is another
quasicode of the same length and size whose support and dual support are
respectively contained in those of $\va$, then $\vb$ is also universally
optimal, since the same $h$ that works for $\va$ also works for $\vb$.  Thus,
$\vb = \va$.

\begin{proposition} \label{prop:quasicode-duality}
Let $\va$ be a quasicode. Then $\va$ is universally optimal if and only if
its dual $\va^\perp$ is.
\end{proposition}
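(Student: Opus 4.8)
The plan is to exploit the duality structure already built into Proposition~\ref{prop:LP-bound} together with the involution $\va^{\perp\perp}=\va$. Since the statement is symmetric in $\va$ and $\va^\perp$, it suffices to prove a single implication: if $\va$ is universally optimal, then so is $\va^\perp$. By Proposition~\ref{prop:support-info}, universal optimality depends only on length, size, support, and dual support, so I would phrase everything in terms of the certificate data $(h,c_0,\dots,c_n)$ furnished by the LP-bound proposition rather than the numerical values of the $A_i$.

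The key observation is that the roles of the support and the dual support, and correspondingly the roles of the primal variables $A_i$ and the dual multipliers $c_j$, simply swap under $\va\mapsto\va^\perp$. Concretely, I would fix a completely monotonic $f$ and invoke Proposition~\ref{prop:LP-bound} for $\va$: there is an auxiliary function $h(i)=\sum_{j} c_j K_j(i)$ with $h\le f$ on $\{1,\dots,n\}$, $c_j\ge 0$ for $j\ge 1$, and equality in the bound, meaning $h(i)=f(i)$ whenever $A_i>0$ and $c_j=0$ whenever $A^\perp_j>0$. The plan is to read this same data backwards to certify optimality of $\va^\perp$ for a companion potential function. Because $\va^\perp$ has support equal to the dual support of $\va$ and dual support equal to the support of $\va$ (using $\va^{\perp\perp}=\va$), the vanishing pattern of the $c_j$ on the dual support of $\va$ becomes exactly a vanishing-on-support condition for $\va^\perp$, and vice versa. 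The relation $A^\perp_j=\frac{1}{\abs\va}\sum_i A_i K_j(i)$, i.e.\ $\va^\perp = \frac{1}{\abs\va}K\va$, together with $K^2=q^nI$, lets me convert the expansion $h=\sum_j c_j K_j$ into a dual expansion whose coefficients are essentially the values $f(i)$ on the support, so the inner products transform cleanly.

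Concretely, the main step is a bookkeeping computation showing that the optimal energy value $N c_0 - h(0)$ for $\va$ matches the predicted optimum $N^\perp c_0^\perp - h^\perp(0)$ for $\va^\perp$ under the exchange $N\leftrightarrow N^\perp$ (recall $N N^\perp=q^n$). Rather than check an arbitrary completely monotonic $f$, I would reduce to the fundamental potential functions $f_j(x)=\binom{n-x}{j}$ of Lemma~\ref{lem:cm-basis}, since these span the cone; by Proposition~\ref{prop:support-info} it is enough to show that for each $j$ the quasicode $\va^\perp$ attains the LP minimum. For these explicit $f_j$ the Krawtchouk expansion of $h$ is combinatorially transparent, and I expect the symmetry under $x\mapsto n-x$ (which swaps $f_j$ with $g_j$ in the proof of Lemma~\ref{lem:cm-basis}) to align precisely with the Fourier duality $K$.

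The hard part will be verifying that the transformed certificate genuinely satisfies the \emph{inequality} $h^\perp\le f^\perp$ on all of $\{1,\dots,n\}$, not merely the equalities on the relevant supports. The equality conditions transfer almost formally from the complementary-slackness description, but the slack inequality $h\le f$ carries nonlocal information about $f$ off the support, and I will need to argue that complete monotonicity is preserved under the duality so that a valid dual potential function exists at all. I anticipate this is exactly where the completely-monotonic/absolutely-monotonic dichotomy and the sign pattern $(-1)^k\diff^k f\ge 0$ must be used, perhaps by again passing to the fundamental functions and checking the finitely many sign conditions directly rather than seeking a slick abstract argument.
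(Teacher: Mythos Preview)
Your instinct at the end---that the crux is showing complete monotonicity is preserved under the duality---is exactly right, and this is the content of Lemma~\ref{lem:cm-dual} in the paper: if $\vf$ is completely monotonic then so is $K^t\vf$, proved via the identity $K^t\vf_j=q^{n-j}\vf_{n-j}$ on the fundamental potentials. But the paper then takes a much shorter path than your certificate-transformation plan. Rather than converting an auxiliary function $h$ for $\va$ into one for $\va^\perp$ and wrestling with the off-support inequality $h^\perp\le f^\perp$, it works directly with the \emph{definition} of a universally optimal quasicode as an energy minimizer over all quasicodes of the same size. Given any completely monotonic $\vf$, one simply computes
\[
\abs{\va}\,\vf^t\va^\perp=\vf^tK\va=(K^t\vf)^t\va\le(K^t\vf)^t\vb=\abs{\vb}\,\vf^t\vb^\perp,
\]
where the inequality is universal optimality of $\va$ applied to the completely monotonic potential $K^t\vf$. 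That is the whole proof.

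Your route through Proposition~\ref{prop:LP-bound} and complementary slackness is not wrong in spirit, but it is a genuine detour: you are trying to build a dual certificate $(h^\perp,c^\perp)$ for each $f$, and as you yourself note, the ``hard part'' of verifying $h^\perp\le f^\perp$ globally does not follow from the slackness data alone. That difficulty simply evaporates once you realize that universal optimality of a quasicode is itself a direct inequality over competitors $\vb$, so no certificate need be constructed at all on the dual side. The only nontrivial ingredient is Lemma~\ref{lem:cm-dual}, which you correctly anticipated.
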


For example, the dual of an LP universally optimal linear code is LP
universally optimal.  We first show that complete monotonicity is preserved
under duality.

\begin{lemma} \label{lem:cm-dual}
If $\vf$ represents a completely monotonic function, then so does $K^t \vf$.
\end{lemma}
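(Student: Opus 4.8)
The plan is to exploit linearity of the map $\vf \mapsto K^t\vf$ together with the generating description of the completely monotonic cone from Lemma~\ref{lem:cm-basis}. That lemma identifies the completely monotonic functions on $\set{0,1,\dots,n}$ with the nonnegative combinations $\sum_{j=0}^n c_j f_j$ of the fundamental potentials $f_j(x)=\binom{n-x}{j}$. Writing $\vf_j=(f_j(0),\dots,f_j(n))$ for the value vector of $f_j$, every completely monotonic $\vf$ equals $\sum_j c_j\vf_j$ with $c_j\ge 0$, so $K^t\vf=\sum_j c_j\,K^t\vf_j$. Hence it suffices to show that each $K^t\vf_j$ is completely monotonic; the conclusion then follows because a nonnegative combination of completely monotonic functions is again completely monotonic. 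The whole problem thus reduces to understanding the single vector $K^t\vf_j$.

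To compute it, I would write out the matrix product as $(K^t\vf_j)(i)=\sum_{k=0}^n K_k(i)\binom{n-k}{j}$ and evaluate this sum with the Krawtchouk generating function
\[
\sum_{k=0}^n K_k(i)\,z^k=(1-z)^i\paren{1+(q-1)z}^{n-i},
\]
which is immediate from the definition~\eqref{eq:krawdef}. The key device is to encode the weight $\binom{n-k}{j}$ as a coefficient extraction $\binom{n-k}{j}=[u^j](1+u)^{n-k}$, where $[u^j]$ denotes the coefficient of $u^j$, so that
\[
(K^t\vf_j)(i)=[u^j]\,(1+u)^n\sum_{k=0}^n K_k(i)(1+u)^{-k}.
\]
Substituting $z=(1+u)^{-1}$ into the generating function collapses the inner sum to $u^i(u+q)^{n-i}/(1+u)^n$; after cancelling $(1+u)^n$ and reading off the coefficient of $u^j$ in $u^i(u+q)^{n-i}$, I expect to obtain
\[
(K^t\vf_j)(i)=q^{n-j}\binom{n-i}{j-i}=q^{n-j}\binom{n-i}{n-j}=q^{n-j}f_{n-j}(i).
\]

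In other words I anticipate the clean identity $K^t\vf_j=q^{n-j}\vf_{n-j}$: the MacWilliams transform carries each fundamental potential to a positive multiple of another fundamental potential. Since $q^{n-j}\ge 0$, this is manifestly a nonnegative multiple of a generator of the cone, hence completely monotonic, which closes the argument via the reduction of the first paragraph. The main obstacle is precisely the coefficient-extraction computation: everything hinges on the substitution $z=(1+u)^{-1}$ simplifying as claimed, so before trusting the general formula I would verify it in a small case (for $n=1$, $q=2$ the matrix $K^t$ has rows $(1,1)$ and $(1,-1)$, and indeed it sends $\vf_0=(1,1)^t$ to $2\vf_1$ and $\vf_1=(1,0)^t$ to $\vf_0$). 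A cleaner but essentially equivalent route would trade the generating function for the Krawtchouk reciprocity $\binom{n}{i}(q-1)^iK_k(i)=\binom{n}{k}(q-1)^kK_i(k)$, which says $K^t=D^{-1}KD$ with $D=\operatorname{diag}\paren{\binom{n}{i}(q-1)^i}$; either way the combinatorial simplification to a single fundamental potential is the crux.
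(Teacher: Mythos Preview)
Your proposal is correct and follows essentially the same route as the paper: both reduce via Lemma~\ref{lem:cm-basis} to the fundamental potentials, both aim for the identity $K^t\vf_j=q^{n-j}\vf_{n-j}$, and both derive it from the Krawtchouk generating function via the substitution $z=(1+u)^{-1}$ and a comparison of coefficients of $u^j$. The only differences are cosmetic---you package the coefficient extraction with the $[u^j]$ operator, whereas the paper clears denominators first and then reads off coefficients.
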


\begin{proof}
Recall from Lemma~\ref{lem:cm-basis} that the functions $f_j(x) =
\binom{n-x}{j}$ form a basis for the cone of completely monotonic functions.
Let $\vf_j$ denote the column vector corresponding to $f_j$.  To see that
$K^t$ leaves the cone of completely monotonic functions invariant, we will
use the identity
\begin{equation} \label{eq:cm-basis-dual}
K^t \vf_j = q^{n-j} \vf_{n-j}.
\end{equation}
Note that it can be rewritten as
\begin{equation}
\label{eq:binomial-duality}
\sum_{k=0}^n \binom{n-k}{j} K_k (i) = q^{n-j}\binom{n-i}{n-j}.
\end{equation}
We use the following generating function for Krawtchouk polynomials
\cite[p.~151]{MS}:
\[
\sum_{k=0}^n K_k(i) z^k = (1 + (q-1)z)^{n-i} (1-z)^{i}.
\]
By setting $z = (1 + w)^{-1}$ we can rewrite it as
\[
\sum_{k=0}^n K_k(i) (w+1)^{n-k} =  (w + q)^{n-i} w^i.
\]
Then \eqref{eq:binomial-duality} follows from comparing the coefficients of
$w^j$ in the above formula.
\end{proof}

\begin{proof}[Proof of Proposition~\ref{prop:quasicode-duality}]
Since the duality operator is an involution, it suffices to prove that if
$\va$ is universally optimal, then so is $\va^\perp$. Every quasicode can be
written as $\vb^\perp$ for some quasicode $\vb$. So it suffices to show that
$\vf^t \va^\perp \leq \vf^t \vb^\perp$ for every completely monotonic
potential $\vf$ whenever $\abs{\va} = \abs{\vb}$. By Lemma~\ref{lem:cm-dual},
$K^t \vf$ is also completely monotonic, and by the universal optimality of
$\va$ we have
\[
\begin{split}
\abs{\va} \vf^t \va^\perp
= \vf^t K \va
&= (K^t \vf)^t \va\\
&\leq (K^t \vf)^t \vb
= \vf^t K \vb
= \abs{\vb} \vf^t \vb^\perp.
\end{split}
\]
Therefore $\va^\perp$ is universally optimal.
\end{proof}

\section{Constructing dual solutions} \label{sec:construct}

To construct auxiliary functions $h$ for use in
Proposition~\ref{prop:LP-bound}, we will use polynomial interpolation. In
this section, we first review the theory of positive definite functions, and
then we prove inequalities on the values of interpolating polynomials.

\subsection{Positive definite functions} \label{sec:pdf}

For every function $h \colon \set{0, 1, \dotsc, n} \to \RR$, we can find
$c_0, c_1, \dots, c_n$ such that
\begin{equation} \label{eq:kraw-expansion}
h(i) = \sum_{j=0}^n c_j K_j(i) \qquad \text{for $i = 0, 1, \dots, n$}.
\end{equation}
Specifically, if $\vh$ is the column vector with entries $(h(i))_{0 \le i \le
n}$, then $\vh^t = \vc^t K$, so that $q^n \vc^t = \vh^t K$ as $K^2 = q^n I$.
Call $c_j$ the \emph{Krawtchouk coefficients} of $h$. For any $0 \leq s \leq
n$, the Krawtchouk polynomials $K_0, K_1, \dots, K_s$ span the polynomials of
degree at most $s$, so if $h$ is given by a polynomial of degree $s$, then
$c_j = 0$ for $j > s$.

Now we consider the requirement $c_j \geq 0$ from
Proposition~\ref{prop:LP-bound}.

\begin{definition}
A function $h \colon \set{0, 1, \dotsc, n} \to \RR$ is \emph{positive
definite} if its Krawtchouk coefficients are nonnegative.
\end{definition}

Such functions are called ``positive definite'' because they are the
functions for which $(h(\abs{x-y}))_{x,y \in \FF_q^n}$ is a positive
semidefinite matrix (see Theorem~2 in \cite{DL98}).

Proposition~\ref{prop:LP-bound} does not actually require $c_0 \geq 0$.
However, there seems to be little harm in assuming it. Doing so allows us to
use properties of positive definite functions such as the following standard
lemma, which follows from (24) in \cite{DL98}.

\begin{lemma} \label{lem:pd-prod}
The product of two positive definite functions is positive definite.
\end{lemma}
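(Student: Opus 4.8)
The plan is to prove Lemma~\ref{lem:pd-prod} by expressing positive definiteness in terms of the matrix characterization rather than working directly with Krawtchouk coefficients, since the coefficient of a product is a convolution that does not visibly preserve sign. Recall from the remark after the definition that $h$ is positive definite exactly when the $q^n \times q^n$ matrix $M_h = (h(\abs{x-y}))_{x,y \in \FF_q^n}$ is positive semidefinite; this is the content of Theorem~2 in \cite{DL98}. So the statement to prove becomes: if $M_g$ and $M_h$ are positive semidefinite, then the matrix associated to the product function $gh$ is also positive semidefinite.

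The key observation is that the matrix associated to the pointwise product $gh$ is precisely the entrywise (Hadamard) product $M_g \circ M_h$, because
\[
(gh)(\abs{x-y}) = g(\abs{x-y}) \cdot h(\abs{x-y}) = (M_g)_{x,y} (M_h)_{x,y}.
\]
The result then follows immediately from the Schur product theorem, which states that the Hadamard product of two positive semidefinite matrices is positive semidefinite. Thus my proof reduces to invoking the correspondence between positive definite functions and positive semidefinite matrices, identifying the product function with the Hadamard product of matrices, and applying Schur's theorem.

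The main obstacle, and the reason the lemma is nontrivial at the level of Krawtchouk coefficients, is that direct manipulation would require showing that the coefficients $c_k$ of $gh$ (which are sums of products of the coefficients of $g$ and $h$ weighted by the linearization coefficients $K_i K_j = \sum_k \gamma^k_{ij} K_k$ of Krawtchouk polynomials) are nonnegative. This amounts to proving that all the linearization coefficients $\gamma^k_{ij}$ are nonnegative, which is a known but substantial fact. The excerpt sidesteps this by citing (24) in \cite{DL98}, and the Schur-product route I describe is the conceptual reason behind that identity, so I would present the matrix argument as the clean proof and note the Krawtchouk-coefficient reformulation only as the content of the cited identity.
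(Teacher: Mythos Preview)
Your argument is correct. The matrix characterization stated just after the definition (Theorem~2 in \cite{DL98}) gives exactly the equivalence you need, the entrywise identity $M_{gh}=M_g\circ M_h$ is immediate, and the Schur product theorem finishes the proof.

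The paper takes a different route: it simply cites (24) in \cite{DL98}, which records that the linearization coefficients in $K_iK_j=\sum_k q_{ij}^{\,k}K_k$ are nonnegative (these are the Krein parameters of the Hamming scheme). From that, positive definiteness of the product follows by bilinearity in the Krawtchouk expansion. Your approach trades one external fact (nonnegativity of Krein parameters) for another (the matrix characterization plus Schur), and is arguably more transparent because it explains \emph{why} those coefficients are nonnegative: Schur's theorem is precisely the general mechanism that forces Krein parameters of any association scheme to be nonnegative. One small overstatement in your last paragraph: the paper does not ``sidestep'' the linearization-coefficient issue so much as invoke it directly via the citation; your route is an alternative proof of the same underlying fact rather than a way around it.
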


\begin{lemma} \label{lem:pd-big}
The function $h(x) = a - x$ is positive definite iff $a \geq (q-1)n/q$.
\end{lemma}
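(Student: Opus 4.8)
The plan is to compute the Krawtchouk coefficients of the linear function $h(x) = a - x$ directly, using the fact that $h$ has degree $1$ and hence only $c_0$ and $c_1$ can be nonzero. Since $K_0, K_1$ span the polynomials of degree at most $1$, I would write $a - x = c_0 K_0(x) + c_1 K_1(x)$ and solve for $c_0, c_1$ by matching the constant and linear terms. From the definition \eqref{eq:krawdef} one has $K_0(x) = 1$ and $K_1(x) = (q-1)n - qx$, so matching the coefficient of $x$ forces $c_1 = 1/q$, which is automatically positive. Matching constants then gives $c_0 = a - (q-1)n/q$.

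With these explicit values in hand, the positive definiteness condition is simply $c_0 \ge 0$ and $c_1 \ge 0$. Since $c_1 = 1/q > 0$ holds unconditionally, the only binding constraint is $c_0 = a - (q-1)n/q \ge 0$, i.e.\ $a \ge (q-1)n/q$. This is exactly the claimed equivalence, so once the coefficients are computed the lemma follows immediately.

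The only genuine step is verifying the formula $K_1(x) = (q-1)n - qx$; everything else is bookkeeping. I expect no real obstacle here, since $K_1$ is read off from \eqref{eq:krawdef} with $k=1$: the $j=0$ term contributes $(q-1)\binom{n-x}{1} = (q-1)(n-x)$ and the $j=1$ term contributes $-\binom{x}{1} = -x$, giving $(q-1)(n-x) - x = (q-1)n - qx$. Thus the proof is a short explicit calculation rather than an argument requiring any of the structural machinery (such as Lemma~\ref{lem:pd-prod} or the generating-function identities) developed elsewhere in this section.
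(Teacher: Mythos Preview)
Your proposal is correct and takes essentially the same approach as the paper: the paper's proof consists of the single observation that $K_1(x) = (q-1)n - qx$, from which the Krawtchouk expansion $a - x = \big(a - (q-1)n/q\big)K_0(x) + (1/q)K_1(x)$ and hence the claimed equivalence are immediate. You have simply written out the bookkeeping in more detail.
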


\begin{proof}
This assertion follows immediately from
\[
K_1(x) = (q-1)n - qx. \qedhere
\]
\end{proof}

\begin{corollary} \label{cor:pd-big-prod}
If $a_1, a_2, \dots, a_s \geq (q-1)n/q$, then $h(x) = (a_1 - x)(a_2 - x)
\dotsm (a_s - x)$ is positive definite.
\end{corollary}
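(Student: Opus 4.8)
The plan is to read this off directly from the two immediately preceding lemmas, with a one-line induction to bridge the gap between the binary product of Lemma~\ref{lem:pd-prod} and the $s$-fold product in the statement. The two ingredients are: (i) each linear factor $a_i - x$ is positive definite by Lemma~\ref{lem:pd-big}, precisely because the hypothesis gives $a_i \geq (q-1)n/q$; and (ii) positive definiteness is closed under multiplication by Lemma~\ref{lem:pd-prod}. So the content of the corollary is entirely contained in these two facts, and all that remains is bookkeeping.

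Concretely, I would induct on $s$. For the base case $s = 1$, the function $h(x) = a_1 - x$ is positive definite by Lemma~\ref{lem:pd-big}. For the inductive step, write
\[
h(x) = \paren{(a_1 - x)(a_2 - x) \dotsm (a_{s-1} - x)} \cdot (a_s - x),
\]
observe that the first factor is positive definite by the inductive hypothesis and the second is positive definite by Lemma~\ref{lem:pd-big}, and then apply Lemma~\ref{lem:pd-prod} to conclude that their product $h$ is positive definite. (If one wants to include $s = 0$, the empty product is the constant function $1 = K_0$, which is positive definite since its only nonzero Krawtchouk coefficient is $c_0 = 1 \geq 0$, so the base case can equally well be taken there.)

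I expect essentially no obstacle here: the statement is a corollary exactly because the real work lives in Lemmas~\ref{lem:pd-big} and~\ref{lem:pd-prod}. The only point worth a moment's thought is that $h$ has degree $s$, which may exceed $n$, so one might worry that the notion of positive definiteness still makes sense. But positive definiteness is a property of the \emph{function} $h$ restricted to $\set{0,1,\dotsc,n}$, determined by its $n+1$ Krawtchouk coefficients $c_0, \dotsc, c_n$, and Lemma~\ref{lem:pd-prod} is likewise a statement about functions on this finite domain. Hence no degree restriction is needed, and the induction goes through without any additional hypotheses.
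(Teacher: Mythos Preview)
Your proposal is correct and matches the paper's intended argument exactly: the paper states the result as a corollary with no proof, placing it immediately after Lemmas~\ref{lem:pd-prod} and~\ref{lem:pd-big}, so the intended justification is precisely the induction you spell out. Your remark about the degree versus the function on $\{0,1,\dotsc,n\}$ is a nice clarification but is not needed for the paper's purposes.
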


\begin{lemma} \label{lem:pd-design}
Let $\va = (A_0, \dots, A_n)$ be a quasicode whose support consists of $a_1 <
a_2 < \cdots < a_s$ and suppose that $\va$ is a $(2s-1)$-design. Then $h(x) =
(a_1 - x)(a_2 - x) \dotsm (a_s - x)$ is positive definite.
\end{lemma}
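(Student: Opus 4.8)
The plan is to compute the Krawtchouk coefficients $c_0, c_1, \dots, c_n$ of $h$ explicitly and verify that each is nonnegative. Since $h$ is a polynomial of degree $s$, we have $c_j = 0$ for $j > s$, so only $c_0, \dots, c_s$ require attention. By the orthogonality relation \eqref{eq:kraw-orthog}, these coefficients are the weighted inner products
\[
c_j = \frac{1}{\binom{n}{j}(q-1)^j} \cdot \frac{1}{q^n} \sum_{i=0}^n \binom{n}{i}(q-1)^i h(i) K_j(i).
\]
The crucial structural feature is that $h$ vanishes at every support point, i.e. $h(a_1) = \dots = h(a_s) = 0$.

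First I would dispose of the coefficients $c_j$ with $j \leq s-1$. For these, the product $h(i) K_j(i)$ is a polynomial of degree at most $2s-1$, so the $(2s-1)$-design hypothesis \eqref{eq:design-avg} allows me to replace the binomial average above by the quasicode average $\frac{1}{N} \sum_{i=0}^n A_i h(i) K_j(i)$. Since $A_i = 0$ for $i > 0$ outside the support, while $h$ annihilates the support, only the $i = 0$ term survives, leaving $\frac{1}{N} h(0) K_j(0)$. Using the standard evaluation $K_j(0) = \binom{n}{j}(q-1)^j$, this collapses to $c_j = h(0)/N = \paren{\prod_{k=1}^s a_k}/N$, which is positive because each $a_k > 0$.

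The coefficient $c_s$ cannot be reached this way, because $h K_s$ has degree $2s$, which lies beyond the range in which the $(2s-1)$-design property is valid; this is the one place where the argument must change tactics, and I regard it as the main obstacle to a uniform treatment. Instead I would pin down $c_s$ by comparing leading coefficients in the expansion $h = \sum_{j=0}^s c_j K_j$. Only $K_s$ contributes in degree $s$: the leading coefficient of $h(x) = \prod_{k}(a_k - x)$ is $(-1)^s$, while that of $K_s$ works out to $(-1)^s q^s/s!$ (consistent with the formula $K_1(x) = (q-1)n - qx$ used earlier). Matching the two gives $c_s = s!/q^s > 0$. Having shown $c_0, \dots, c_s$ are all positive, I conclude that $h$ is positive definite, which completes the proof.
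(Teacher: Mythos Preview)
Your proof is correct and follows essentially the same approach as the paper: both handle $c_j$ for $j\le s-1$ by invoking the $(2s-1)$-design identity \eqref{eq:design-avg} on $hK_j$ and observing that only the $i=0$ term survives, and both obtain $c_s>0$ by comparing leading coefficients. You go slightly further in recording the explicit values $c_j = h(0)/N$ for $j\le s-1$ and $c_s = s!/q^s$, whereas the paper only notes nonnegativity, but the argument is the same.
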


\begin{proof}
Let $c_j$ be as in \eqref{eq:kraw-expansion}. Since $h$ is a degree $s$
polynomial, $c_j = 0$ for $j > s$.  To show that $c_s>0$, all we need to
check is that the leading coefficient of $K_s$ has sign $(-1)^s$, which is in
fact true for each term in \eqref{eq:krawdef}.  Now, for $j \leq s -1$, using
the fact that $\va$ is a $(2s-1)$-design and $h \cdot K_j$ is a polynomial of
degree at most $2s-1$, we have by the orthogonality \eqref{eq:kraw-orthog} of
the Krawtchouk polynomials and \eqref{eq:design-avg} that
\[
\begin{split}
(q-1)^j \binom{n}{j} c_j &= q^{-n} \sum_{i=0}^n \binom{n}{i} (q-1)^i h(i) K_j(i)\\
&= \frac{1}{N} \sum_{i=0}^n A_i h(i) K_j(i).
\end{split}
\]
The right side is nonnegative since $A_i h(i) = 0$ for $i \geq 1$ (because
$h$ vanishes on the support of $\va$) and $A_0 h(0)K_j(0) \geq 0$.
\end{proof}

\begin{lemma} \label{lem:pd-mds}
For $0 \leq j \leq n$, the function $h(x) = (n-j+1 - x)(n-j+2 - x) \cdots (n
- x)$ is positive definite.
\end{lemma}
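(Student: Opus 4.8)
The plan is to identify $h$ with a rescaled fundamental potential function and then read off its Krawtchouk coefficients from an identity already in hand. The naive attempt would be to invoke Corollary~\ref{cor:pd-big-prod} with $a_k = n-j+k$, but that requires every root to be at least $(q-1)n/q = n - n/q$, and the smallest root $n-j+1$ drops below this threshold once $j > n/q + 1$. So for large $j$ one cannot simply bound the coefficients by treating the factors one at a time; the real task is to compute them exactly.

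First I would perform the substitution $m = n - x$, which turns the product into $m(m-1)\cdots(m-j+1) = j!\binom{n-x}{j}$. In the notation of Lemma~\ref{lem:cm-basis}, this says $h = j!\,f_j$, where $f_j(x) = \binom{n-x}{j}$ is the $j$-th fundamental potential function. Thus the roots of $h$ are exactly $n-j+1, \dots, n$, as the statement requires, but the useful point is that $h$ is, up to a positive scalar, one of the very basis functions whose MacWilliams transform we already understand from the proof of Lemma~\ref{lem:cm-dual}.

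Next I would recover the Krawtchouk coefficients $\vc$ of $h$ from the relation $q^n \vc = K^t \vh$, which follows from $\vh^t = \vc^t K$ together with $K^2 = q^n I$. Writing $\vh = j!\,\vf_j$ and applying the identity \eqref{eq:cm-basis-dual}, namely $K^t \vf_j = q^{n-j}\vf_{n-j}$, gives
\[
\vc = q^{-n} K^t \vh = j!\,q^{-n} K^t \vf_j = j!\,q^{-j}\,\vf_{n-j},
\]
so that in coordinates $c_k = j!\,q^{-j}\binom{n-k}{n-j}$.

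Finally, each entry $\binom{n-k}{n-j}$ is a binomial coefficient of nonnegative integers, hence nonnegative, and it vanishes for $k > j$, consistent with $h$ having degree $j$. Therefore every Krawtchouk coefficient of $h$ is nonnegative, so $h$ is positive definite. The only subtle point is recognizing the binomial identity hidden in the product; once $h$ is seen as $j!\,f_j$, the already-established transform formula \eqref{eq:cm-basis-dual} does all the work, and no genuine obstacle remains.
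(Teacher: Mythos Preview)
Your proof is correct and follows essentially the same route as the paper: recognize $h = j!\,f_j$, invert the Krawtchouk expansion via $\vc = q^{-n}K^t\vh$, and apply the identity \eqref{eq:cm-basis-dual} to obtain $\vc = j!\,q^{-j}\vf_{n-j} \ge 0$. Your added remarks on why Corollary~\ref{cor:pd-big-prod} fails for large $j$ and the explicit formula $c_k = j!\,q^{-j}\binom{n-k}{n-j}$ are helpful elaborations, but the underlying argument is identical to the paper's.
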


\begin{proof}
We have $h(x) = j!f_j(x)$, where $f_j$ is the fundamental potential function
from Lemma~\ref{lem:cm-dual}. So $\vc^t = q^{-n} \vh^t K = q^{-n} j! \vf_j^t
K = q^{-j}j! \vf_{n-j}^t \geq 0$ by \eqref{eq:cm-basis-dual}.
\end{proof}

\subsection{Polynomial interpolation} \label{sec:interpolation}

We begin with an analogue of Rolle's theorem.

\begin{lemma} \label{lem:discrete-rolle}
Let $a < b$ be integers. If \[g \colon \set{a,a+1, \dots, b+1} \to \RR\]
satisfies $g(a)g(a+1) \leq 0$ and $g(b)g(b+1) \leq 0$, then $\diff g(c) \diff
g(c+1) \leq 0$ for some $a \leq c < b$.
\end{lemma}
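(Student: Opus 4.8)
The plan is to argue by contradiction, converting the absence of a sign change in $\diff g$ into strict monotonicity of $g$ and then colliding that with the two prescribed sign changes at the ends. Suppose, then, that $\diff g(c)\,\diff g(c+1) > 0$ for every $c$ with $a \le c \le b-1$ (this range is nonempty since $a < b$). Each such inequality forces $\diff g(c)$ and $\diff g(c+1)$ to be nonzero and of the same sign, so by transitivity $\diff g(a), \diff g(a+1), \dots, \diff g(b)$ all share a single strict sign. Equivalently, $g$ is strictly monotone on all of $\set{a, a+1, \dots, b+1}$.

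Replacing $g$ by $-g$ changes neither the hypotheses nor the conclusion, each being a product of two values or of two differences of $g$, so I may assume $g$ is strictly increasing. Now I read off the endpoint data. From $g(a) < g(a+1)$ together with $g(a)g(a+1) \le 0$ the only consistent possibility is $g(a) \le 0 \le g(a+1)$, and symmetrically $g(b) < g(b+1)$ with $g(b)g(b+1) \le 0$ gives $g(b) \le 0 \le g(b+1)$. In particular $g(a+1) \ge 0 \ge g(b)$.

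Next I combine these bounds with monotonicity. Since $a < b$ we have $a+1 \le b$, so strict monotonicity yields $g(a+1) \le g(b)$, and chaining gives $0 \le g(a+1) \le g(b) \le 0$. When $a+1 < b$ the middle inequality is strict, an outright contradiction; when $a+1 = b$ the chain instead forces $g(a+1) = g(b) = 0$. Thus the only configuration that escapes the contradiction is a single zero of $g$ sitting at the point $a+1$ shared by the two unit intervals.

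That residual configuration is the step I expect to be the main obstacle. With $b = a+1$ and $g(a+1) = 0$ the clean monotonicity contradiction dissolves, and one must instead inspect the three values $g(a), g(a+1), g(a+2)$ directly, writing $\diff g(a)\,\diff g(a+1) = (g(a+1)-g(a))(g(a+2)-g(a+1))$. The delicate point is that the two endpoint hypotheses have to be read as genuine sign changes of $g$ across their respective intervals, rather than as mere contact with zero at the common point; isolating this short finite case keeps the generic monotonicity argument clean, while the symmetric strictly-decreasing situation is absorbed once and for all by the $g \mapsto -g$ reduction made at the outset.
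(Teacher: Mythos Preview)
Your contradiction argument via strict monotonicity is sound and is essentially the contrapositive of the paper's direct approach, which normalizes to $g(a)\le 0\le g(a+1)$ and then locates the first index at which the sequence $g(a+1),g(a+2),\ldots,g(b+1)$ fails to strictly increase. Both routes handle the generic situation cleanly, and you have correctly isolated the one residual configuration: $b=a+1$ with $g(a+1)=0$.

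Your final paragraph, however, does not actually dispose of that configuration, and in fact it cannot be disposed of: it is a genuine counterexample to the lemma as stated. Take $a=0$, $b=1$, and $g(0)=-1$, $g(1)=0$, $g(2)=1$. Then $g(0)g(1)=0$ and $g(1)g(2)=0$, so both hypotheses hold, yet the only admissible value is $c=0$ and $\Delta g(0)\,\Delta g(1)=1\cdot 1>0$. The paper's own proof slides over exactly this point: its claim that ``the sequence $g(a+1),\ldots,g(b+1)$ cannot be strictly increasing'' is false here, since that sequence is $0,1$ and $g(b)=g(a+1)=0$ already satisfies ``at least one of $g(b),g(b+1)$ is $\le 0$''. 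Your instinct that the endpoint conditions ought to encode genuine sign changes is the correct diagnosis; one clean repair is to add the hypothesis $a+1<b$, after which your strict-monotonicity contradiction goes through verbatim. But as the lemma is written, neither your argument nor the paper's can be completed, and you deserve credit for flagging the gap rather than papering over it.
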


\begin{proof}
Without loss of generality, we assume $g(a) \leq 0$ and $g(a+1) \geq 0$.
Since at least one of $g(b) \leq 0$ and $g(b+1) \leq 0$ is true, the sequence
$g(a+1), g(a+2), \dots, g(b+1)$ cannot be strictly increasing. If $c$ is the
smallest integer such that $g(c+1) \geq g(c+2)$, then $\diff g(c) > 0$ and
$\diff g(c+1) \leq 0$, as desired.
\end{proof}

\begin{lemma} \label{lem:discrete-rolle2}
Let $a_1 < a_2 < \cdots < a_r$ be integers. If a function
\[
g \colon
\set{a_1, a_1 + 1, a_1+2, \dots, a_r+1} \to \RR
\]
satisfies $g(a_i)g(a_{i+1}) \leq 0$ for $i = 1, 2, \dots, r$, then there is
some integer $c$ such that $a_1\leq c \leq a_r-r+1$ and $\diff^{r-1} g(c)
\diff^{r-1} g(c+1) \leq 0$.
\end{lemma}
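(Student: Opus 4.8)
The plan is to prove this by induction on $r$, using Lemma~\ref{lem:discrete-rolle} to drive the inductive step, exactly as one iterates Rolle's theorem so that each application of the difference operator $\diff$ costs one designated point. The one wrinkle is a mismatch between two notions of sign change: the hypotheses $g(a_i)g(a_{i+1}) \le 0$ record a sign change between the possibly far-apart points $a_i$ and $a_{i+1}$, whereas Lemma~\ref{lem:discrete-rolle} is phrased in terms of a sign change across a single unit step, $g(a)g(a+1)\le 0$. So I would first convert the given data into unit-step sign changes and then run the induction on that cleaner, self-reproducing formulation. (Here I read $a_{r+1}$ as $a_r+1$, the largest point of the domain, which is forced since the index $i=r$ requires $a_{r+1}$ to lie in $\set{a_1,\dots,a_r+1}$ and exceed $a_r$.)

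First I would perform the reduction. Each hypothesis $g(a_i)g(a_{i+1})\le 0$ forces a unit-step sign change inside $[a_i,a_{i+1}]$: if instead $g(x)g(x+1)>0$ for every $x$ with $a_i\le x<a_{i+1}$, then $g(a_i),\dots,g(a_{i+1})$ would all be nonzero of one common sign, contradicting $g(a_i)g(a_{i+1})\le 0$. This yields integers $b_i$ with $a_i\le b_i\le a_{i+1}-1$ and $g(b_i)g(b_i+1)\le 0$; because $b_i\le a_{i+1}-1<a_{i+1}\le b_{i+1}$, they satisfy $b_1<b_2<\cdots<b_r$, and moreover $a_1\le b_1$ and $b_r\le a_r$.

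Then I would prove, by induction on $r$, the unit-step version: if $b_1<\cdots<b_r$ satisfy $g(b_i)g(b_i+1)\le 0$ for all $i$, then some $c$ with $b_1\le c\le b_r-r+1$ has $\diff^{r-1}g(c)\,\diff^{r-1}g(c+1)\le 0$. The base case $r=1$ is immediate with $c=b_1$. For the inductive step, applying Lemma~\ref{lem:discrete-rolle} to each consecutive pair $(b_i,b_{i+1})$ produces integers $d_i$ with $b_i\le d_i<b_{i+1}$ and $\diff g(d_i)\,\diff g(d_i+1)\le 0$; these $d_1<\cdots<d_{r-1}$ are again unit-step sign changes, now for the function $\diff g$, so the induction hypothesis applied to $\diff g$ furnishes $c$ with $\diff^{r-2}(\diff g)(c)\,\diff^{r-2}(\diff g)(c+1)=\diff^{r-1}g(c)\,\diff^{r-1}g(c+1)\le 0$. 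The range bookkeeping then closes up: $c\ge d_1\ge b_1$, while the inductive bound $c\le d_{r-1}-(r-1)+1$ together with $d_{r-1}\le b_r-1$ gives $c\le b_r-r+1$. Feeding back the reduction inequalities $a_1\le b_1$ and $b_r\le a_r$ converts this into the stated range $a_1\le c\le a_r-r+1$.

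The step I expect to be most delicate is not any one computation but keeping the two notions of sign change straight: recognizing that the induction must be carried on the unit-step formulation, which reproduces itself under $g\mapsto\diff g$, rather than on the given far-apart formulation, which does not, and then checking that the index ranges telescope correctly through both the reduction and the recursion. Everything else is routine.
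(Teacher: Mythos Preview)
Your proof is correct and follows exactly the route the paper indicates in its one-line proof (``repeatedly applying Lemma~\ref{lem:discrete-rolle}''). Your preliminary reduction from the far-apart sign changes $g(a_i)g(a_{i+1})\le 0$ to unit-step sign changes $g(b_i)g(b_i+1)\le 0$, followed by the clean induction on $r$ with careful index bookkeeping, spells out precisely the details the paper leaves implicit; your reading of $a_{r+1}$ as $a_r+1$ is the natural one given the stated domain.
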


\begin{proof}
This follows from repeatedly applying Lemma~\ref{lem:discrete-rolle}.
\end{proof}

\begin{lemma} \label{lem:interpolate2}
Let $f \colon \set{0,1,\dots,n} \to \RR$ be completely monotonic, let
$a_1,\dots,a_r \in \{0,1,\dots,n\}$ be distinct, and let $p$ be the unique
polynomial of degree less than $r$ such that $p(a_i) = f(a_i)$ for $i = 1, 2,
\dots, r$.  Then
\begin{equation} \label{eq:interp1}
(f(x) - p(x)) \prod_{i=1}^{r} (a_{i} - x) \geq 0
\end{equation}
for all $x = 0, 1, \dots, n$, and $p$ has the expansion
\begin{equation} \label{eq:interp2}
p(x) = \sum_{j=0}^{r-1} c_j \prod_{i=1}^{j} (a_{i} - x)
\end{equation}
with $c_0,\dots,c_{r-1} \ge 0$.
\end{lemma}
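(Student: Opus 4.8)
The plan is to treat the two conclusions \eqref{eq:interp1} and \eqref{eq:interp2} separately, proving the product inequality \eqref{eq:interp1} first and then deducing nonnegativity of the coefficients in \eqref{eq:interp2} from it. For the deduction I would build $p$ one node at a time: let $p_k$ be the degree-less-than-$k$ interpolant of $f$ at $a_1,\dots,a_k$, so $p_0 = 0$ and $p_r = p$. Since $p_k - p_{k-1}$ has degree less than $k$ and vanishes at $a_1,\dots,a_{k-1}$, it equals $c_{k-1}\prod_{i=1}^{k-1}(a_i - x)$ for some constant $c_{k-1}$, and telescoping produces exactly the expansion \eqref{eq:interp2} with $c_0 = p_1 = f(a_1)$. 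Evaluating at $x = a_k$ gives $c_{k-1}\prod_{i=1}^{k-1}(a_i - a_k) = f(a_k) - p_{k-1}(a_k)$, so applying the already-established inequality \eqref{eq:interp1} to the node set $\set{a_1,\dots,a_{k-1}}$ at the point $a_k$ yields $c_{k-1}\paren{\prod_{i=1}^{k-1}(a_i - a_k)}^2 \geq 0$; as the nodes are distinct the square is positive and hence $c_{k-1}\geq 0$. (The base case $c_0 = f(a_1)\geq 0$ is just complete monotonicity in degree $0$.) Thus everything reduces to \eqref{eq:interp1}.

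For \eqref{eq:interp1} I may assume the $a_i$ are sorted, since both $p$ and the product depend only on the node set, and I write $g = f-p$. The cleanest reformulation uses the Newton form of the remainder: for $x \notin \set{a_1,\dots,a_r}$ one has $g(x) = [a_1,\dots,a_r,x]f \cdot \prod_{i=1}^r(x - a_i)$, where $[\,\cdot\,]f$ denotes a divided difference, so that $g(x)\prod_{i=1}^r(a_i - x) = (-1)^r [a_1,\dots,a_r,x]f \cdot \paren{\prod_{i=1}^r(x-a_i)}^2$. Because the square is nonnegative, \eqref{eq:interp1} is equivalent to the statement that the order-$r$ divided differences of a completely monotonic function carry the sign $(-1)^r$, which is natural since $\Delta^r g = \Delta^r f$ has the single sign $(-1)^r$ by complete monotonicity (as $\Delta^r p = 0$). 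I would establish this divided-difference sign using the discrete Rolle estimates of Lemmas~\ref{lem:discrete-rolle} and~\ref{lem:discrete-rolle2}, which transfer the constant sign of $\Delta^r f$ to the divided difference. As an alternative route, by Lemma~\ref{lem:cm-basis} and linearity of both sides of \eqref{eq:interp1} in $f$, it suffices to verify the claim for the single basis function $f_j(x) = \binom{n-x}{j}$; after the reflection $x \mapsto n-x$ this reduces to the divided differences of $\binom{x}{j}$, whose finite differences $\binom{x}{j-k}$ are manifestly nonnegative.

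The main obstacle is precisely this divided-difference positivity, which is the discrete analogue of the nonnegativity of the Hermite--Genocchi (Peano) kernel, and the difficulty is the degeneracy introduced by the weak inequalities: complete monotonicity only gives $(-1)^r\Delta^r f \geq 0$, so $\Delta^r g$ may vanish at some points, and the zeros of $g$ at the nodes are not genuine sign changes. Running the discrete Rolle lemmas directly on such degenerate data risks losing the strict sign changes needed to contradict a hypothetical wrong-signed value of $g$. I would circumvent this by first proving \eqref{eq:interp1} for strictly completely monotonic $f$ --- for instance by perturbing $f$ by a small multiple of $x \mapsto \lambda^x$ with $0<\lambda<1$, which is strictly completely monotonic --- where $\Delta^r f$ is strictly of sign $(-1)^r$ and the sign-change bookkeeping via Lemma~\ref{lem:discrete-rolle2} becomes exact, and then passing to the limit, which is harmless because \eqref{eq:interp1} is a closed condition. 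Arranging this strict sign count so that any violation of \eqref{eq:interp1} forces more than $r$ sign changes of $g$, contradicting the constant sign of $\Delta^r g$, is the one genuinely delicate step.
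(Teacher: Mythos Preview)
Your reduction of \eqref{eq:interp2} to \eqref{eq:interp1} via the telescoping interpolants $p_k$ is exactly the paper's argument, and your plan to prove \eqref{eq:interp1} by extracting the divided difference $A=[a_1,\dots,a_r,x]f$ and showing $(-1)^rA\ge 0$ via the discrete Rolle Lemmas~\ref{lem:discrete-rolle}--\ref{lem:discrete-rolle2} is also the paper's route: your $A$ is precisely the constant the paper introduces.

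Where you diverge is in the handling of the degeneracy, and here you make life harder than necessary. You propose to run Rolle on $g=f-p$, worry that its zeros at the $a_i$ are only weak sign changes, and patch this by perturbing $f$ to be strictly completely monotonic. That works, but the paper sidesteps the issue entirely: it applies Rolle not to $f-p$ but to
\[
g(t)=f(t)-p(t)-A\prod_{i=1}^r(t-a_i),
\]
with $A$ chosen so that $g(x)=0$. Now $g$ has $r+1$ \emph{exact} zeros at $a_1,\dots,a_r,x$, so Lemma~\ref{lem:discrete-rolle2} gives some $c$ with $\Delta^r g(c)\,\Delta^r g(c+1)\le 0$, and hence $(-1)^r\Delta^r g(c')\le 0$ for one of $c'\in\{c,c+1\}$. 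Since $\Delta^r g = \Delta^r f - A\,r!$ and $(-1)^r\Delta^r f\ge 0$, this weak inequality already forces $(-1)^rA\ge 0$: if $(-1)^rA$ were negative then $(-1)^r\Delta^r g$ would be strictly positive everywhere, contradicting the Rolle conclusion. No perturbation is needed; the weak inequalities take care of themselves once the auxiliary term is added.

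A small caution on your alternative route: reducing to the basis functions $f_j$ is fine, but the claim that the order-$r$ divided differences of $\binom{x}{j}$ at arbitrary integer nodes are ``manifestly nonnegative'' is not immediate from $\Delta^k\binom{x}{j}=\binom{x}{j-k}$, since divided differences at non-consecutive nodes are not forward differences. It is true (via a discrete Peano kernel expressing the divided difference as a nonnegative combination of values of $\Delta^r f$), but that fact is essentially what the Rolle argument is proving.
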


\begin{proof}
For \eqref{eq:interp1}, suppose $x \not\in \{a_1,\dots,a_r\}$, since
otherwise the inequality is trivial, and define $g\colon \set{0, 1, \dots, n}
\to \RR$ by
\begin{equation} \label{eq:gdef}
g(t) = f(t) - p(t) - A (t-a_1)(t-a_2) \cdots (t-a_r)
\end{equation}
with the constant $A$ chosen so that $g(x) = 0$; in other words,
\[
A = \frac{f(x) - p(x)}{\prod_{i=1}^r (x - a_i)}.
\]
We have $g(a_i) = 0$ for $i = 1, 2, \dots, r$ as well as $g(x) = 0$, so
Lemma~\ref{lem:discrete-rolle2} implies that there is some integer $c$ such
that $\diff^r g (c) \diff^r g(c+1) \leq 0$. Thus, $(-1)^r \diff^r g(c') \leq
0$ for either $c' = c$ or $c' = c+1$. Now, \eqref{eq:gdef} implies
\[
\diff^r g(c') = \diff^r f(c') - A r!
\]
and we have $(-1)^r \diff^r f(c') \geq 0$ by complete monotonicity, so
$(-1)^r A \geq 0$. Therefore,
\[
(f(x) - p(x)) \prod_{i=1}^r (a_i- x) = (-1)^r A \prod_{i=1}^r (x - a_i)^2
\geq 0.
\]

For \eqref{eq:interp2}, we solve for $c_0,\dots,c_{r-1}$ successively
starting with $c_0 = p(a_1) = f(a_1) \ge 0$.  Now for each $\ell$, the
polynomial $p_\ell$ defined by
\[
p_\ell(x) = \sum_{j=0}^{\ell-1} c_j \prod_{i=1}^j (a_i-x)
\]
is the unique polynomial of degree less than $\ell$ satisfying $p_\ell(a_i) =
f(a_i)$ for $i = 1, 2, \dots, \ell$. Applying \eqref{eq:interp1} to
$p_{\ell}$, we find that for $1 \le \ell \le r-1$,
\begin{align*}
0 &\leq (f(a_{\ell+1}) -  p_{\ell}(a_{\ell+1})) \prod_{i=1}^{\ell} (a_i-a_{\ell+1})\\
&=  (p_{\ell+1}(a_{\ell+1}) -  p_{\ell}(a_{\ell+1})) \prod_{i=1}^{\ell} (a_i-a_{\ell+1})\\
&= c_{\ell}\prod_{i=1}^{\ell} (a_i-a_{\ell+1})^2.
\end{align*}
It follows that $c_\ell \geq 0$, as desired.
\end{proof}

\section{Criteria for universal optimality} \label{sec:criteria}

We now use the inequalities from Section~\ref{sec:construct} to construct
auxiliary functions for use in Proposition~\ref{prop:LP-bound}.  These
results are applied in Table~\ref{tab:codes} as indicated by the lemma or
proposition numbers in square brackets in each line of the table.  For the
lines without references, we must resort to solving linear programs directly.

Recall that we do not include zero in the support of a quasicode.

\begin{definition}
Given a quasicode $\va$ of length $n$ over $\FF_q$, a \emph{pair covering} is
a subset $T \subseteq \set{1, 2, \dots, n}$ with elements $b_1 < b_2 < \dots
< b_t$ containing the support of $\va$ and such that $b_{2i-1} + 1 = b_{2i}$
whenever $2i \leq t$, while $b_t = n$ if $t$ is odd.
\end{definition}

\begin{proposition} \label{prop:UO-design-pd}
Let $\va$ be a quasicode of length $n$ and $T$ a pair covering of $\va$ with
elements $b_1 < b_2 < \cdots < b_t$. Then $\va$ is universally optimal if the
following two hypotheses are satisfied:
\begin{enumerate}
\item[(a)] The quasicode $\va$ is a $(t-1)$-design.
\item[(b)] For $1 \leq j \leq t-1$, the function $q_j(x) =
    \prod_{i=0}^{j-1} (b_{t-i}-x)$ is positive definite.
\end{enumerate}
\end{proposition}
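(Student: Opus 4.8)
The plan is to apply Proposition~\ref{prop:LP-bound}, taking the auxiliary function $h$ to be the polynomial $p$ of degree less than $t$ that interpolates $f$ at all $t$ points of the pair covering $T = \set{b_1, \dots, b_t}$. Because the support of $\va$ is contained in $T$, this choice immediately forces $h(i) = f(i)$ on the support, which is exactly the first equality condition of Proposition~\ref{prop:LP-bound}. The real work is then to verify that $h = p$ satisfies the two structural requirements of that proposition, namely $h(i) \le f(i)$ for all $i$ and positive definiteness, and finally to check the dual-support equality condition.

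For the inequality $h \le f$, I would apply \eqref{eq:interp1} from Lemma~\ref{lem:interpolate2} with $r = t$ and $\set{a_1, \dots, a_r} = \set{b_1, \dots, b_t}$, obtaining $(f(x) - p(x)) \prod_{i=1}^t (b_i - x) \ge 0$ for every $x \in \set{0, 1, \dots, n}$. The purpose of the pair-covering hypothesis is precisely to make the product $\prod_{i=1}^t (b_i - x)$ nonnegative on $\set{0,1,\dots,n}$: the adjacency condition $b_{2i-1}+1 = b_{2i}$ groups the factors into pairs $(b_{2i-1}-x)(b_{2i-1}+1-x)$, each a product of the values of a linear function at two consecutive integers and hence $\ge 0$ for integer $x$, while if $t$ is odd the leftover factor is $b_t - x = n - x \ge 0$. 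Dividing, I conclude $f(x) \ge p(x)$ everywhere.

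For positive definiteness I would invoke the expansion \eqref{eq:interp2}, but with the interpolation points listed in \emph{decreasing} order: setting $a_i = b_{t-i+1}$, Lemma~\ref{lem:interpolate2} yields $p(x) = \sum_{j=0}^{t-1} c_j q_j(x)$ with all $c_j \ge 0$, where $q_j(x) = \prod_{i=0}^{j-1}(b_{t-i}-x)$ is exactly the function appearing in hypothesis (b). Since $q_0 = 1 = K_0$ is trivially positive definite and each $q_j$ with $1 \le j \le t-1$ is positive definite by (b), and since the Krawtchouk coefficients of a function depend linearly on it, $h = p$ has nonnegative Krawtchouk coefficients; moreover these coefficients vanish for indices $\ge t$ because $p$ has degree at most $t-1$. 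To finish, hypothesis (a) says $\va$ is a $(t-1)$-design, so $A^\perp_j = 0$ for $1 \le j \le t-1$, and combining this with the vanishing of the coefficients for indices $\ge t$ shows that every index $j > 0$ in the dual support has coefficient $0$. Proposition~\ref{prop:LP-bound} then certifies that $\va$ attains its energy bound for this $f$, and since $f$ was an arbitrary completely monotonic function, $\va$ is universally optimal.

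I expect the main obstacle to be the careful orchestration of Lemma~\ref{lem:interpolate2}: the single interpolating polynomial $p$ must be analyzed in two seemingly incompatible ways. The inequality \eqref{eq:interp1} exploits the full symmetric product $\prod_{i=1}^t(b_i-x)$, whose sign is governed by the pair-covering structure, whereas the expansion \eqref{eq:interp2} must be taken with the $b_i$ in decreasing order so that its partial products coincide with the $q_j$ of hypothesis (b). Reconciling these two viewpoints, and then matching the degree bound on $p$ against the $(t-1)$-design condition so that exactly the dual-support terms are annihilated, is the crux of the argument.
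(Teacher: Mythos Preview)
Your proposal is correct and follows essentially the same argument as the paper: interpolate $f$ by a polynomial $h$ of degree less than $t$ on $T$, use \eqref{eq:interp1} together with the pair-covering structure to get $h\le f$, use \eqref{eq:interp2} with the nodes in decreasing order plus hypothesis (b) for positive definiteness, and invoke the $(t-1)$-design condition for the dual-support complementary slackness. The only cosmetic difference is that the paper explicitly takes $a_i=b_{t+1-i}$ once for both parts of Lemma~\ref{lem:interpolate2}, whereas you emphasize that the two invocations can use different orderings of the same node set.
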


We conjecture that condition (a) alone suffices.

\begin{proof}
Let $f \colon \set{0,1,\dots,n} \to \RR$ be completely monotonic, and let $h$
be the unique polynomial of degree less than $t$ such that $h(x) = f(x)$ for
all $x \in T$. We will show that $h$ satisfies the hypotheses of
Proposition~\ref{prop:LP-bound} and the conditions for equality.

For the inequality $f(x) \geq h(x)$, we apply \eqref{eq:interp1} with $a_i =
b_{t+1-i}$ and use the fact that $\prod_{i=1}^{t}(b_{i} - x) \geq 0$ for all
$x$ because $T$ is a pair covering.

To show that $h$ is positive definite, we write
\[
h(x) = \sum_{j=0}^{t-1} c_j \prod_{i=0}^{j-1} (b_{t-i} - x)
\]
with $c_j \ge 0$ by \eqref{eq:interp2} and $\prod_{i=0}^{j-1} (b_{t-i} - x)$
being positive definite by hypothesis (b).

All that remains is to check the complementary slackness conditions.  Because
$h(x) = f(x)$ for all $x \in T$, they are equal on the support of $\va$.
Because $\va$ is a $(t-1)$-design, the dual support is contained in
$\{t,\dots,n\}$ and hence the Krawtchouk coefficients of $h$ vanish on the
dual support.  Thus, $\va$ is universally optimal.
\end{proof}

Now we discuss two special cases in which part (b) of
Proposition~\ref{prop:UO-design-pd} is easy to verify using our results from
Section~\ref{sec:pdf}, as well as two elementary cases that do not fit into
the framework of Proposition~\ref{prop:UO-design-pd}.

\begin{proposition} \label{prop:UO-design-cover}
Let $\va$ be a quasicode of length $n$ over $\FF_q$, and let $T$ be a pair
covering of $\va$ with $\abs{T} = t$. If $\va$ is a $(t-1)$-design and at
most one element of $T$ is less than $(q-1)n/q$, then $\va$ is universally
optimal.
\end{proposition}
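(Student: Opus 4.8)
The plan is to deduce this directly from Proposition~\ref{prop:UO-design-pd}. Hypothesis (a) there is precisely the assumption that $\va$ is a $(t-1)$-design, which we are given. So the only real work is to verify hypothesis (b): that for each $1 \le j \le t-1$, the function $q_j(x) = \prod_{i=0}^{j-1}(b_{t-i} - x)$ is positive definite. Once this is done, Proposition~\ref{prop:UO-design-pd} immediately yields universal optimality.

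First I would identify exactly which elements of $T$ appear as factors in $q_j$. Writing $T = \set{b_1 < b_2 < \cdots < b_t}$, the product $q_j(x)$ consists of the factors $(b_t - x)(b_{t-1} - x)\cdots(b_{t-j+1} - x)$, i.e., the $j$ largest elements of $T$. As $j$ ranges over $1, 2, \dots, t-1$, the smallest index that ever occurs is $t-(t-1)+1 = 2$. Hence the smallest element $b_1$ is never a factor of any $q_j$ with $j \le t-1$, and every such factor comes from $\set{b_2, \dots, b_t}$.

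Next I would invoke the hypothesis that at most one element of $T$ lies below the threshold $(q-1)n/q$. Since $b_1$ is the smallest element, this forces $b_2, b_3, \dots, b_t \ge (q-1)n/q$. Therefore each $q_j$ is a product of linear factors $(a - x)$ with $a \ge (q-1)n/q$, and Corollary~\ref{cor:pd-big-prod} shows that $q_j$ is positive definite. This establishes hypothesis (b) for all $1 \le j \le t-1$, so Proposition~\ref{prop:UO-design-pd} applies and $\va$ is universally optimal.

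There is essentially no analytic obstacle here; the whole argument rests on the bookkeeping observation that the potentially-small element $b_1$ never enters the products $q_j$ for $j \le t-1$. This is exactly what lets the uniform lower bound of Corollary~\ref{cor:pd-big-prod} apply to every factor at once. The point worth double-checking is the off-by-one in the indices of $q_j$, to confirm that the range $j \le t-1$ (rather than $j \le t$) is what excludes $b_1$; this is why the proposition tolerates \emph{one} small element of $T$ rather than requiring all of them to exceed the threshold.
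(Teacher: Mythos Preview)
Your proof is correct and essentially identical to the paper's: both reduce to verifying hypothesis (b) of Proposition~\ref{prop:UO-design-pd}, observe that for $j \le t-1$ the product $q_j$ involves only $b_2,\dots,b_t$, note these must all be at least $(q-1)n/q$ since only $b_1$ can fall below the threshold, and apply Corollary~\ref{cor:pd-big-prod}.
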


\begin{proof}
We only need to check condition (b) of Proposition~\ref{prop:UO-design-pd}.
Since at most one element of $T$ is less than $(q-1)n/q$, namely $b_1$, the
product $\prod_{i=0}^{j-1} (b_{t-i} - x)$ is positive definite by
Corollary~\ref{cor:pd-big-prod} for $1 \leq j \leq t-1$.
\end{proof}

\begin{proposition} \label{prop:UO-design-spread}
Let $\va$ be a quasicode of length $n$ over $\FF_q$. Suppose that $\va$ has
$s$ support elements and is a $(2s-1)$-design. Furthermore, suppose that
every two elements in the support differ by at least $2$, and at most one
element of the support is less than $(q-1)n/q$.  Then $\va$ is LP universally
optimal.
\end{proposition}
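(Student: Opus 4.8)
The plan is to reduce to Proposition~\ref{prop:UO-design-pd} by manufacturing an explicit pair covering from the support, and then to verify its positive-definiteness hypothesis (b) directly, using the spread condition to ensure that every factor but one lies above the threshold $(q-1)n/q$.

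Write the support as $a_1 < a_2 < \cdots < a_s$. Since consecutive support elements differ by at least $2$, the two-element blocks $\set{a_i, a_i+1}$ are disjoint and lie in $\set{1,\dots,n}$, so I would pair each $a_i$ with $a_i+1$. If $a_s < n$, take $T = \set{a_1, a_1+1, \dots, a_s, a_s+1}$, a pair covering with $t = 2s$ elements; if $a_s = n$, instead pair only $a_1,\dots,a_{s-1}$ and keep $a_s = n$ as the mandatory odd final element, giving $t = 2s-1$. Either way $t - 1 \le 2s - 1$, so the $(2s-1)$-design hypothesis immediately yields condition (a) of Proposition~\ref{prop:UO-design-pd}.

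It then remains to check condition (b): that $q_j(x) = \prod_{i=0}^{j-1}(b_{t-i}-x)$ is positive definite for $1 \le j \le t-1$. Set $\tau = (q-1)n/q$. Because at most one support element lies below $\tau$ --- necessarily $a_1$ --- every element of $T$ other than $b_1 = a_1$ and possibly $b_2 = a_1+1$ is at least $\tau$. For $j \le t-2$ the factors of $q_j$ are the $j$ largest elements of $T$, all of index at least $3$ and hence at least $\tau$, so $q_j$ is positive definite by Corollary~\ref{cor:pd-big-prod}. The only delicate case is $j = t-1$, where $q_{t-1}(x) = \prod_{i=2}^{t}(b_i - x)$ includes the potentially small factor $a_1+1-x$.

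Handling $q_{t-1}$ is the crux. I would first peel off the factors $a_i + 1 - x$ coming from the paired elements with $i \ge 2$; these are positive definite by Corollary~\ref{cor:pd-big-prod}, so by Lemma~\ref{lem:pd-prod} it suffices to show that $(a_1+1-x)\prod_{i=2}^{s}(a_i - x)$ is positive definite (reading $a_s$ as $n$ in the odd case). Using the telescoping identity $a_1+1-x = (a_1 - x) + 1$, this equals $\prod_{i=1}^{s}(a_i - x) + \prod_{i=2}^{s}(a_i - x)$. The first summand is positive definite by Lemma~\ref{lem:pd-design}, which is exactly where the full $(2s-1)$-design strength is consumed, and the second is positive definite by Corollary~\ref{cor:pd-big-prod} since $a_2,\dots,a_s \ge \tau$; a sum of positive definite functions is again positive definite. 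With (a) and (b) verified, Proposition~\ref{prop:UO-design-pd} concludes that $\va$ is universally optimal. The single sub-threshold factor $a_1+1$ is the only real obstacle, and absorbing it is precisely why we need a $(2s-1)$-design (rather than merely $(2s-2)$) together with the splitting above.
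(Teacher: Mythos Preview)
Your proof is correct and follows the same overall strategy as the paper: build a pair covering $T$ of size $t \in \{2s-1,2s\}$ and invoke Proposition~\ref{prop:UO-design-pd}, with Lemma~\ref{lem:pd-design} supplying the one nontrivial positive-definiteness check. The difference is that the paper pairs $a_1$ \emph{downward} with $a_1-1$ (and $a_i$ upward for $i\ge 2$), so that the troublesome factor in $q_{t-1}$ is $(a_1-x)$ rather than your $(a_1+1-x)$; the paper then handles $q_{t-1}$ multiplicatively, factoring it as $\prod_{i=1}^{s}(a_i-x)\cdot\prod_{i\ge 2}(a_i+1-x)$ and appealing to Lemma~\ref{lem:pd-prod}, whereas you handle it additively via $(a_1+1-x)=(a_1-x)+1$. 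Both routes consume Lemma~\ref{lem:pd-design} at exactly the same point. A small bonus of your upward pairing is that it never needs $a_1-1\ge 1$, so it sidesteps the implicit assumption $a_1\ge 2$ in the paper's construction.
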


\begin{proof}
We shall construct a pair covering that satisfies the conditions of
Proposition~\ref{prop:UO-design-pd}. Suppose that nonzero elements of the
support are $a_1 < a_2 < \cdots < a_s$, so that $a_i \geq (q-1)n/q$ for all
$i \geq 2$. If $a_s < n$, then set
\[
\begin{split}
T &= \set{a_1 - 1, a_1} \cup \set{a_2, a_2 + 1} \cup \set{a_3, a_3 + 1}\\
&\qquad  \phantom{} \cup \cdots \cup \set{a_s, a_s + 1} 
\end{split}
\]
and if $a_s = n$, then set
\[
T = \set{a_1 - 1, a_1} \cup \set{a_2, a_2 + 1} \cup \set{a_3, a_3 + 1} \cup \cdots \cup \set{a_s}.
\]
By construction, $T$ is a pair covering. Let $t = \abs{T}$. When $a_s < n$,
we have $t = 2s$, and when $a_s = n$, we have $t = 2s - 1$. So $\va$ is
always a $(t-1)$-design and condition (a) of
Proposition~\ref{prop:UO-design-pd} is satisfied.

Now we check condition (b) of Proposition~\ref{prop:UO-design-pd}. In the
$a_s < n$ case, the partial product of an initial segment of \[(a_s + 1 -
x)(a_s - x) \cdots (a_2 + 1 - x)(a_2 - x)\] is positive definite by
Corollary~\ref{cor:pd-big-prod} since $a_j \geq (q-1)n/q$ for $j \geq 2$.
Furthermore
\[(a_s + 1 - x)(a_s - x) \cdots (a_2 + 1 - x)(a_2 - x)(a_1 - x)\] is positive
definite: \[(a_s - x)(a_{s-1}-x) \cdots (a_1 - x)\] is positive definite by
Lemma~\ref{lem:pd-design} and \[(a_s+1 - x)(a_{s-1}+1 -x) \cdots (a_2 + 1 -
x)\] is positive definite by Corollary~\ref{cor:pd-big-prod}, and so their
product is positive definite by Lemma~\ref{lem:pd-prod}. This completes the
$a_s < n$ case. The $a_s = n$ case is nearly identical. Thus, condition (b)
of Proposition~\ref{prop:UO-design-pd} is satisfied, and $\va$ is universally
optimal.
\end{proof}

\begin{proposition} \label{prop:UO-1d}
Let $\va$ be a quasicode. Suppose that $\va$ is a $1$-design, whose support
consists of a single integer or two consecutive integers. Then $\va$ is
universally optimal.
\end{proposition}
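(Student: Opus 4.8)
The plan is to apply Proposition~\ref{prop:LP-bound} directly, using an auxiliary function $h$ that is merely \emph{linear}; this is what puts the proposition outside the scope of Proposition~\ref{prop:UO-design-pd}, since a single degree-one function will do all the work. First I would record the two consequences of complete monotonicity that I need: taking $k=1$ and $k=2$ in the definition gives $\diff f \le 0$ and $\diff^2 f \ge 0$, so $f$ is nonincreasing and discretely convex. Next, because $\va$ is a $1$-design its dual support lies in $\{2,\dots,n\}$, so any $h$ of degree at most $1$ has Krawtchouk coefficients $c_j = 0$ for $j \ge 2$; in particular $c_j = 0$ throughout the dual support, and the complementary-slackness condition on the dual side is automatic. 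Writing $h(x) = \alpha + \beta x$ and using $K_1(x) = (q-1)n - qx$, so that $x = ((q-1)n - K_1(x))/q$, one reads off $c_1 = -\beta/q$. Thus the only positivity I must arrange is $c_1 \ge 0$, i.e.\ a nonpositive slope $\beta$, and no condition on the intercept is needed.

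I would then choose $h$ according to the two cases. If the support is a single integer $\{a\}$, take $h$ to be a supporting line of $f$ at $a$ with nonpositive slope, say $h(x) = f(a) + \beta(x-a)$ with $\beta = \diff f(a) \le 0$ when $a < n$ and $\beta = f(a) - f(a-1) \le 0$ when $a = n$. Convexity of $f$ (the slope is a discrete subgradient at $a$) forces $h(x) \le f(x)$ for all $x \in \{0,1,\dots,n\}$, with equality at $a$. If the support is two consecutive integers $\{a, a+1\}$, take $h$ instead to be the secant line through $(a, f(a))$ and $(a+1, f(a+1))$, of slope $\diff f(a) \le 0$. This slope is simultaneously the right subgradient of $f$ at $a$ and the left subgradient at $a+1$, so the secant is a supporting line at both nodes; hence $h(x) \le f(x)$ for every integer $x$, with equality at $a$ and at $a+1$.

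In either case $h$ is linear with nonpositive slope, so $c_1 \ge 0$ and $c_j = 0$ for $j \ge 2$, while $h$ agrees with $f$ on the support of $\va$. The hypotheses and the equality conditions of Proposition~\ref{prop:LP-bound} are therefore met, yielding $\vf^t\vb \ge N c_0 - h(0)$ for every quasicode $\vb$ of the same size, with equality attained by $\va$. Since this holds for all completely monotonic $f$ (equivalently, for the fundamental potentials of Lemma~\ref{lem:cm-basis}), $\va$ is universally optimal.

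The only delicate point, and hence the step I expect to be the main obstacle, is verifying $h \le f$ on all of $\{1,\dots,n\}$. For the single-point case this is the discrete supporting-line property of convex functions, which needs mild care at the endpoints $a = 1$ and $a = n$ (here it helps that $f$ is defined on $\{0,1,\dots,n\}$). For the two-point case the crucial observation is that a secant of a convex function can exceed the function only strictly between its two nodes, and consecutiveness of $a$ and $a+1$ guarantees that no integer lands there; this is exactly why the support is required to consist of consecutive integers. I would emphasize that, in contrast to Proposition~\ref{prop:UO-design-pd}, this argument never invokes full positive definiteness of $h$—only $c_1 \ge 0$ rather than $c_0 \ge 0$—which is precisely why the position of the support relative to $(q-1)n/q$ is irrelevant and the two cases escape that framework.
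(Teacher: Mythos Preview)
Your proof is correct and matches the paper's approach exactly: the paper's sketch reads ``We use a linear auxiliary function that agrees with the potential function on the support, and on a neighboring point if the support has size one,'' which is precisely your secant/tangent construction, and your verification of $h\le f$ via discrete convexity and of $c_1\ge 0$ via monotonicity fills in the details the paper omits.
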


\begin{proof}[Sketch of proof]
We use a linear auxiliary function that agrees with the potential function on
the support, and on a neighboring point if the support has size one.
\end{proof}

\begin{proposition} \label{prop:UO-3sup}
Let $\va$ be a binary quasicode of length $n$.  Suppose $\va$ is supported at
$\set{0, a-1, a, a+1}$ where $a$ is odd, while its dual $\va^\perp$ satisfies
$A^\perp_1 = A^\perp_n = 0$. Then $\va$ is universally optimal.
\end{proposition}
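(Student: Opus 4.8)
The plan is to apply Proposition~\ref{prop:LP-bound} directly, exploiting the binary setting and the two vanishing dual coordinates to get away with an extremely sparse auxiliary function. Since $q=2$, we have $K_1(x) = n - 2x$ and $K_n(x) = (-1)^x$, and the hypotheses $A^\perp_1 = A^\perp_n = 0$ say precisely that $1$ and $n$ lie outside the dual support of $\va$. This suggests building the auxiliary function $h$ out of $K_0$, $K_1$, and $K_n$ alone, i.e.
\[
h(x) = c_0 + c_1(n-2x) + c_n(-1)^x,
\]
so that all the intermediate Krawtchouk coefficients $c_2, \dots, c_{n-1}$ vanish automatically and the complementary-slackness conditions on the dual support hold for free.

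First I would pin down $c_0, c_1, c_n$ by forcing $h$ to agree with a given completely monotonic $f$ at the three support points $a-1, a, a+1$. Because $a$ is odd, the endpoints $a\pm 1$ are even while $a$ is odd, so $K_n$ takes equal values at $a-1$ and $a+1$; taking the appropriate combinations of the three interpolation equations isolates the coefficients. The difference across $a-1, a+1$ gives $c_1 = (f(a-1) - f(a+1))/4$, and the second difference gives $c_n = \diff^2 f(a-1)/4$, with $c_0$ then determined. Complete monotonicity makes $f$ nonincreasing and convex, so $c_1 \ge 0$ and $c_n \ge 0$, which is exactly the positivity of the Krawtchouk coefficients required in Proposition~\ref{prop:LP-bound} (the sign of $c_0$ is irrelevant there).

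The main obstacle is verifying the pointwise inequality $h(i) \le f(i)$ for $1 \le i \le n$. Here is the approach I expect to work. Set $u(x) = f(x) - c_0 - c_1(n-2x)$, so that $u$ is convex (its second difference equals $\diff^2 f \ge 0$) and $g := f - h = u - c_n(-1)^x$. The interpolation conditions become $u(a-1) = u(a+1) = c_n$ and $u(a) = -c_n$, so along $a-1, a, a+1$ the convex function $u$ dips from $c_n$ down to $-c_n$ and back up. Since $c_n \ge 0$, the slope of $u$ is $\le -2c_n \le 0$ up to $a$ and $\ge 2c_n \ge 0$ from $a$ onward; hence $u$ is nonincreasing on $\set{0, \dots, a}$ and nondecreasing on $\set{a, \dots, n}$, with global minimum at $a$. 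I would then split by parity: every even $x$ satisfies $x \le a-1$ or $x \ge a+1$ (as $a$ is odd), and monotonicity on the appropriate side gives $u(x) \ge c_n$; meanwhile every odd $x$ satisfies $u(x) \ge u(a) = -c_n$. These are exactly the inequalities $u(x) \ge c_n(-1)^x$, i.e. $g(x) \ge 0$, across the whole range, and the argument is uniform in $c_n \ge 0$.

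Finally I would assemble the pieces. By construction $h = f$ on the support $\set{a-1, a, a+1}$, giving the primal equality conditions; and since $c_j = 0$ for $2 \le j \le n-1$ while $A^\perp_1 = A^\perp_n = 0$ confines the dual support to $\set{2, \dots, n-1}$, the coefficients $c_j$ vanish on the dual support, giving the dual equality conditions. Proposition~\ref{prop:LP-bound}, which applies to quasicodes, then shows that $\va$ attains the lower bound $\abs{\va}c_0 - h(0)$ for $f$-energy. As $f$ was an arbitrary completely monotonic function, $\va$ is universally optimal. I expect the only genuinely delicate point to be the parity split above, where it is essential that the middle support point $a$ is odd, so that the deepest value $-c_n$ of the convex function $u$ lands on an odd coordinate and the even-coordinate bound stays at the higher level $c_n$.
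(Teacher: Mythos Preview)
Your proof is correct and takes essentially the same approach as the paper: the auxiliary function $h = c_0 K_0 + c_1 K_1 + c_n K_n$ you construct is precisely the one the paper writes down (in a slightly different but equivalent form), and you have supplied the ``one can check'' details the paper omits, in particular the convexity/parity argument establishing $h \le f$.
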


\begin{proof}[Sketch of proof]
For a potential function $f$, one can check that the auxiliary function
\[
\begin{split}
&f(a-1) + \frac{1}{2} (f(a-1) - f(a+1)) (a - 1 - x)\\
&\quad \phantom{} + \frac{1}{4} ( f(a-1) - 2f(a) + f(a+1)) (K_n (x) - 1)
\end{split}
\]
works as $h(x)$ in Proposition~\ref{prop:LP-bound}.
\end{proof}

\section{Removing a codeword from a code} \label{sec:remove}

In this section, we show that removing a single codeword from an LP
universally optimal code always yields a universally optimal code. This
surprising fact will follow from a strengthening of the Delsarte linear
program due to Ashikhmin and Simonis \cite{AS98}.  It can fail without LP
universal optimality: in $\FF_2^2$, the three-point code
$\{(0,0),(0,1),(1,1)\}$ is universally optimal, but $\{(0,0),(0,1)\}$ is not.

\begin{proposition}[Ashikhmin and Simonis \cite{AS98}]
\label{prop:AS} Let $\C$ be a code of length $n$ over an alphabet of
size $q$ and such that $q$ does not divide $\abs{\C}$, and let
$(A_0,\dots,A_n)$ be its distance distribution. Then for $0 \leq j \leq
n$,
\[
\abs{\C} \sum_{i=0}^n A_i K_j(i)  \geq (q-1)^j \binom{n}{j}.
\]
\end{proposition}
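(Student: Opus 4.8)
The plan is to pass to the Fourier side, where $\abs{\C}\sum_i A_i K_j(i)$ becomes a sum of squared character sums, and then to exploit the hypothesis $q \nmid \abs{\C}$ through the arithmetic of cyclotomic integers. First I would equip the alphabet $\FF_q$ with the group structure $\ZZ/q\ZZ$, fix a primitive $q$-th root of unity $\zeta$, and for $a \in \FF_q^n$ set $\widehat{\mathbf{1}}_{\C}(a) = \sum_{x \in \C} \zeta^{\ang{a,x}}$, where $\ang{a,x} = \sum_k a_k x_k \bmod q$. Writing $\wt(a)$ for the number of nonzero coordinates of $a$, the defining formula \eqref{eq:krawdef} yields the character-sum identity $\sum_{\wt(a)=j} \zeta^{\ang{a,z}} = K_j(\wt(z))$, and summing $\zeta^{\ang{a,x-y}}$ over $x,y \in \C$ and over all $a$ with $\wt(a)=j$ in the two possible orders then gives
\[
\abs{\C}\sum_{i=0}^n A_i K_j(i) = \sum_{\wt(a)=j} \abs{\widehat{\mathbf{1}}_{\C}(a)}^2 .
\]
Since there are exactly $(q-1)^j\binom{n}{j}$ vectors $a$ with $\wt(a)=j$, the claim is equivalent to the assertion that the average of $\abs{\widehat{\mathbf{1}}_{\C}(a)}^2$ over these $a$ is at least $1$.

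The key is that each $\widehat{\mathbf{1}}_{\C}(a)$ is a \emph{nonzero} algebraic integer in $\ZZ[\zeta]$. The Galois group $(\ZZ/q\ZZ)^\times$ of $\mathbb{Q}(\zeta)/\mathbb{Q}$ acts by $\sigma_t(\zeta)=\zeta^t$, and since $\sigma_t(\widehat{\mathbf{1}}_{\C}(a)) = \widehat{\mathbf{1}}_{\C}(ta)$ (coordinatewise scaling by the unit $t$) with $\wt(ta)=\wt(a)$, the conjugates of $\widehat{\mathbf{1}}_{\C}(a)$ are again squared-norm contributors of the same weight $j$. Nonvanishing follows because $\zeta \equiv 1$ modulo the ramified prime above $q$, so $\widehat{\mathbf{1}}_{\C}(a) \equiv \abs{\C} \not\equiv 0$ there, using $q \nmid \abs{\C}$. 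Consequently the field norm $\prod_{t}\widehat{\mathbf{1}}_{\C}(ta)$ is a nonzero rational integer, hence of absolute value at least $1$. Partitioning the weight-$j$ vectors into orbits under the scaling action and applying the arithmetic--geometric mean inequality on each orbit, I would conclude that the sum of $\abs{\widehat{\mathbf{1}}_{\C}(a)}^2$ over an orbit is at least that orbit's size; summing over orbits gives the desired bound.

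The main obstacle is the nonvanishing step together with the orbit bookkeeping when $q$ is composite. For prime $q$ the prime $(1-\zeta)$ has residue field $\FF_q$ and the congruence $\widehat{\mathbf{1}}_{\C}(a)\equiv\abs{\C}$ immediately forces nonvanishing; for composite $q$ one must work modulo a prime of $\ZZ[\zeta]$ over a suitable prime divisor of $q$ and verify that $q \nmid \abs{\C}$ still produces a nonzero residue, which is the delicate point and may require choosing the abelian group structure on the alphabet with care. One must also track stabilizers so that the norm, which runs over all $\phi(q)$ units $t$, is correctly related to the product over the distinct vectors of each orbit before invoking AM--GM. Because the group is abelian, the stabilizer is constant along an orbit, so every vector in an orbit enters the norm with the same multiplicity, and the per-orbit conclusion that the average is at least $1$ survives.
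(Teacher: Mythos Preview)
Your Fourier reduction $\abs{\C}\sum_i A_i K_j(i)=\sum_{\wt(a)=j}\abs{\widehat{\mathbf 1}_\C(a)}^2$ is correct for any abelian group structure on the alphabet, and for \emph{prime} $q$ the rest of your argument goes through: $\zeta\equiv 1$ modulo $(1-\zeta)$ with residue field $\FF_q$, so $\widehat{\mathbf 1}_\C(a)\equiv\abs{\C}\not\equiv 0$, and the norm/AM--GM step then gives each Galois orbit a contribution at least its own size. The gap is that for composite $q$ the nonvanishing claim is simply false, and no choice of group structure repairs it. Take $q=6$, $n=1$, $\C=\{0,1,2\}\subset\ZZ/6\ZZ$ (so $6\nmid\abs{\C}=3$) and $a=2$: then $\widehat{\mathbf 1}_\C(2)=1+\zeta_6^{2}+\zeta_6^{4}=1+\zeta_3+\zeta_3^{2}=0$, and the orbit $\{2,4\}$ contributes $0$ rather than $2$ to your sum. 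More generally, modulo any prime of $\ZZ[\zeta_q]$ above $p\mid q$ the root $\zeta_q$ reduces to a primitive $(q/p^{v_p(q)})$-th root of unity, not to $1$, so the congruence $\widehat{\mathbf 1}_\C(a)\equiv\abs{\C}$ you rely on is available only when $q$ is a prime power; and even then, if $p\mid\abs{\C}$ but $p^k\nmid\abs{\C}$, the residue is zero and more work is needed than you indicate.

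The paper itself does not prove the proposition in the text; it cites \cite{AS98} and points to the longer arXiv version for a streamlined variant of their argument, which is combinatorial rather than character-theoretic and works for arbitrary $q$. In fact the paper explicitly addresses your line of attack at the end of Section~\ref{sec:remove}: the elementary Fourier proof (Theorem~5 of \cite{BBMOS} for $q=2$, and the Simonis--de~Vroedt adaptation \cite{SV91}) can be pushed to prime-power alphabets, but the authors ``see no way to salvage it in general.'' So your proposal recovers a known partial result; for the full statement a genuinely different method is required.
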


See \arXiv{1212.1913v1} for a streamlined variant of the proof from
\cite{AS98}.

\begin{lemma} \label{lemma:qmult}
Let $f$ be any potential function.  If the Delsarte linear program proves
that a code $\C \subseteq \FF_q^n$ minimizes $f$-potential energy, then
either $|\C|$ is a multiple of $q$ or $f$ is minimized at all the distances
between pairs of distinct codewords in $\C$.
\end{lemma}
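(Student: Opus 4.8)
The plan is to combine the complementary slackness clause of Proposition~\ref{prop:LP-bound} with the Ashikhmin--Simonis inequality of Proposition~\ref{prop:AS}. The hypothesis that the Delsarte linear program proves $\C$ minimizes $f$-energy means, by linear programming duality, that there is a dual certificate attaining the bound: a function $h \colon \set{0,1,\dots,n} \to \RR$ with $h(i) \le f(i)$ for $i \ge 1$ and Krawtchouk coefficients $c_0, \dots, c_n$ with $c_j \ge 0$ for $j \ge 1$, for which the lower bound $Nc_0 - h(0)$ of Proposition~\ref{prop:LP-bound} equals $E_f(\C)$, where $N = \abs{\C}$. The equality clause of that proposition then yields two facts simultaneously: $h(i) = f(i)$ for every $i > 0$ in the support of $\C$, and $c_j = 0$ for every $j > 0$ in the dual support.

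Next I would use Proposition~\ref{prop:AS} to pin down the dual support in the case $q \nmid \abs{\C}$. Writing $A_j^\perp = \frac{1}{\abs{\C}} \sum_{i=0}^n A_i K_j(i)$ as in \eqref{eq:dualdef}, the Ashikhmin--Simonis inequality reads $\abs{\C}^2 A_j^\perp \ge (q-1)^j \binom{n}{j}$, whose right-hand side is strictly positive for every $0 \le j \le n$. Hence $A_j^\perp > 0$ for all $j$, so the dual support is the entire set $\set{1, 2, \dots, n}$.

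Combining the two observations finishes the argument. Since every $j > 0$ now lies in the dual support, complementary slackness forces $c_j = 0$ for all $j \ge 1$, so $h = c_0 K_0$ is the constant function $c_0$. The inequality $h \le f$ on $\set{1, \dots, n}$ then reads $c_0 \le f(i)$ for all $i \ge 1$, while the support equality $h = f$ gives $f(i) = c_0$ at every nonzero distance occurring in $\C$. Thus $f$ attains its minimum value $c_0$ at all the distances between pairs of distinct codewords, which is exactly the second alternative in the statement.

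I expect the only delicate point to be the first step: making precise that ``the Delsarte linear program proves minimization'' supplies a dual optimum attaining equality in Proposition~\ref{prop:LP-bound}, together with the correct reading of its complementary slackness conditions on both the support and the dual support. Once that dual certificate is in hand, the remainder is a direct substitution, since the strict positivity of all the binomial coefficients makes the Ashikhmin--Simonis bound force the full dual support with no further estimation required.
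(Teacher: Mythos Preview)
Your proof is correct and follows essentially the same route as the paper's: assume $q\nmid|\C|$, invoke Proposition~\ref{prop:AS} to conclude that every $A_j^\perp$ is strictly positive, use complementary slackness from Proposition~\ref{prop:LP-bound} to force all $c_j$ with $j\ge1$ to vanish so that $h$ is constant, and then read off that $f$ equals this constant on the support and is at least this constant elsewhere. The paper's version is simply a more compressed statement of the identical argument.
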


\begin{proof}
Suppose $|\C|$ is not a multiple of $q$.  Proposition~\ref{prop:AS} shows
that the dual distance distribution of $\C$ is strictly positive, and thus
the auxiliary function in Proposition~\ref{prop:LP-bound} must be constant.
Then the conclusion follows, because the auxiliary function is less than or
equal to $f$ everywhere and equal on the support of $\C$.
\end{proof}

\begin{corollary} \label{cor:multq}
Every LP universally optimal code in $\FF_q^n$ has size a multiple of
$q$ unless all pairs of distinct points in the code are at distance
$n$.
\end{corollary}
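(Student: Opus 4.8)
The plan is to deduce the corollary directly from Lemma~\ref{lemma:qmult} by applying it to a single, carefully chosen potential function. Since $\C$ is LP universally optimal, its distance distribution optimizes \eqref{eq:LP} for every completely monotonic $f$, so by Proposition~\ref{prop:LP-bound} the Delsarte linear program proves that $\C$ minimizes $f$-potential energy for every such $f$. Suppose now that $\abs{\C}$ is not a multiple of $q$; the goal is to conclude that every pair of distinct codewords lies at distance $n$. By Lemma~\ref{lemma:qmult}, for each completely monotonic $f$ the function must be minimized at all the distances between distinct codewords, meaning $f(i) = \min_{1 \le j \le n} f(j)$ for every $i > 0$ with $A_i > 0$.

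Next I would exhibit a completely monotonic $f$ that is strictly decreasing on $\set{1, \dots, n}$. The exponential $f(r) = c^r$ with $0 < c < 1$ serves this purpose: its iterated differences are $\diff^k f(r) = c^r (c-1)^k$, so $(-1)^k \diff^k f(r) = c^r (1-c)^k \ge 0$, confirming complete monotonicity, while $\diff f < 0$ gives strict monotonicity. (The inverse power law $f(r) = r^{-\alpha}$ already noted in the introduction would work equally well.)

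For a strictly decreasing function on $\set{1, \dots, n}$, the minimum value is attained uniquely at the largest argument $n$. Combining this with the conclusion of the first paragraph, every distance $i$ with $A_i > 0$ must satisfy $f(i) = f(n)$, which forces $i = n$. Thus the only distance occurring between distinct codewords is $n$, which is exactly the statement that all pairs of distinct points in $\C$ lie at distance $n$. Since this holds whenever $\abs{\C}$ fails to be a multiple of $q$, the dichotomy asserted by the corollary follows.

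The deduction is short, so I expect no serious obstacle; the only point requiring care is the order of quantifiers. Lemma~\ref{lemma:qmult} supplies a separate conclusion for each potential function, and the argument works precisely because a single strictly decreasing completely monotonic $f$ exists: applying the lemma to that one $f$ already pins down every occurring distance, so no uniformity across the whole cone of completely monotonic functions is needed.
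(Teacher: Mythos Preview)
Your proof is correct and follows the same route the paper intends: the corollary is stated immediately after Lemma~\ref{lemma:qmult} with no separate argument, and your write-up simply makes explicit the obvious step of applying that lemma to a single strictly decreasing completely monotonic potential (such as $r \mapsto c^r$ or $r \mapsto r^{-\alpha}$), whose unique minimizer on $\{1,\dots,n\}$ is $n$. The only superfluous touch is the appeal to Proposition~\ref{prop:LP-bound}: the hypothesis of Lemma~\ref{lemma:qmult} is already the definition of LP universal optimality specialized to $f$, so no extra justification is needed there.
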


In the latter case, there can be at most $q$ points in the code.

\begin{proposition}
\label{prop:removept-f} Let $\C \subseteq \FF_q^n$ be a code and let $f
\colon \set{1, 2, \dots, n} \to \RR$ be any function (not necessarily
completely monotonic) such that the Delsarte linear programming bounds prove
$\C$ minimizes $f$-potential energy. Let $c \in \C$. Then
$E_f(\C\setminus\set{c}) \leq E_f(\C')$ for every code $\C'\subseteq \FF_q^n$
with $\abs{\C'} = \abs{\C}-1$.
\end{proposition}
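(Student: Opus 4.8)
The plan is to extract from the hypothesis a tight dual solution for $\C$, compute the energy of $\C \setminus \set{c}$ exactly, and then match it against a lower bound for codes of size $\abs{\C}-1$ coming from the strengthened linear program. Write $N = \abs{\C}$. Because the Delsarte bounds prove $\C$ minimizes $f$-energy, linear programming duality yields an auxiliary function $h$ with Krawtchouk coefficients $c_0, \dots, c_n$ (where $c_j \ge 0$ for $j \ge 1$) and $h \le f$ on $\set{1, \dots, n}$, such that $E_f(\C) = N c_0 - h(0)$ and the complementary slackness conditions of Proposition~\ref{prop:LP-bound} hold: $h(i) = f(i)$ whenever $A_i > 0$, and $c_j = 0$ whenever $j \ge 1$ and $A^\perp_j > 0$.

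The key step is to show that the potential at a codeword, $D_c := \sum_{y \in \C,\, y \neq c} f(\abs{c-y})$, is the same for every $c \in \C$. Since $f = h$ on the support of $\C$, I may replace $f$ by $h$ and expand $h = \sum_j c_j K_j$, reducing matters to the sums $T_j(z) = \sum_{y \in \C} K_j(\abs{z-y})$. For each $j \ge 1$ with $c_j \neq 0$ we have $A^\perp_j = 0$, so $\sum_{z \in \C} T_j(z) = N^2 A^\perp_j = 0$; but this sum equals $\mathbf{1}_\C^t M_j \mathbf{1}_\C$, where $M_j = (K_j(\abs{x-y}))_{x,y}$ is positive semidefinite by Krawtchouk positive-definiteness, so $M_j \mathbf{1}_\C = 0$ and hence $T_j(z) = 0$ for every $z$. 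Consequently $\sum_{y \in \C} h(\abs{c-y}) = c_0 N$ for every $c$, giving $D_c = c_0 N - h(0) = E_f(\C)$ independent of $c$. Removing $c$ then yields $E_f(\C \setminus \set{c}) = \frac{1}{N-1}\paren{N E_f(\C) - 2 D_c} = \frac{N-2}{N-1}\, E_f(\C)$.

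It remains to show this value is a lower bound for $E_f(\C')$ over all $\C'$ of size $N-1$, and here I split on divisibility. When $q \nmid N-1$, I apply the Ashikhmin--Simonis strengthening (Proposition~\ref{prop:AS}) to $\C'$: repeating the LP-bound computation with $\sum_i A_i K_j(i) \ge (q-1)^j\binom{n}{j}/(N-1) = K_j(0)/(N-1)$ for $j \ge 1$ gives $E_f(\C') \ge c_0(N-1) - h(0) + \frac{1}{N-1}\sum_{j\ge 1} c_j K_j(0)$. Using $K_j(0) = (q-1)^j\binom{n}{j}$ and $\sum_{j\ge 0} c_j K_j(0) = h(0)$, the correction term is $\frac{h(0)-c_0}{N-1}$, and a short computation shows the whole bound equals $\frac{N-2}{N-1}E_f(\C)$, exactly the energy of $\C \setminus \set{c}$. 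When instead $q \mid N-1$, we have $q \nmid N$, so Lemma~\ref{lemma:qmult} forces $f$ to attain its minimum value $m = \min_i f(i)$ at every distance occurring in $\C$; then every pair in $\C \setminus \set{c}$ contributes $m$, giving $E_f(\C \setminus \set{c}) = (N-2)m$, while any $\C'$ of size $N-1$ satisfies $E_f(\C') \ge (N-2)m$ from the trivial estimate $f \ge m$.

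The main obstacle is the middle paragraph: recognizing that tightness of the linear program forces the weighted neighbor-count $T_j(z)$ to vanish identically, not merely on average, through the positive semidefiniteness of $M_j$; this is precisely what makes $D_c$, and hence the per-codeword energy deficit, independent of the choice of $c$. The second delicate point is the bookkeeping showing that the Ashikhmin--Simonis correction term matches $E_f(\C\setminus\set{c})$ on the nose. The divisibility case split is engineered so that exactly when that strengthening is unavailable, Lemma~\ref{lemma:qmult} collapses the problem into pure counting.
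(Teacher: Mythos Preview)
Your proof is correct and uses the same external inputs (Lemma~\ref{lemma:qmult} and Proposition~\ref{prop:AS}) as the paper, but the internal organization is genuinely different. The paper works on the primal side: it observes that the change of variables $B_i = \frac{N-2}{N-1}A_i$ converts the Delsarte inequalities for $(A_i)$ into the Ashikhmin--Simonis inequalities for $(B_i)$, so that optimality of $\C$ in the Delsarte LP transfers directly to optimality of the \emph{expected} distance distribution after deleting a random codeword; the fact that every specific $\C\setminus\{c\}$ meets this bound then falls out of linearity of expectation at the very end. You instead work on the dual side, carrying the auxiliary function $h$ throughout: your positive-semidefiniteness argument ($\mathbf{1}_\C^t M_j \mathbf{1}_\C = 0 \Rightarrow M_j\mathbf{1}_\C = 0$) establishes \emph{a priori} that $D_c$ is independent of $c$, and then a direct calculation shows the Ashikhmin--Simonis lower bound for size $N-1$ equals $\frac{N-2}{N-1}E_f(\C)$ on the nose. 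Your route has the virtue of producing the distance-regularity statement (constancy of $D_c$) constructively and up front, whereas the paper recovers it only as a corollary of the averaging step; the paper's route is a bit slicker and makes the primal-LP correspondence between Delsarte and Ashikhmin--Simonis more visible.
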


\begin{proof}
By Lemma~\ref{lemma:qmult} we may assume that $|\C|$ is a multiple of $q$,
because the other case in the lemma is trivial. Let $N = |\C|$, let
$(1,A_1,\dots,A_n)$ be the distance distribution of $\C$, and let
$(1,B_1,\dots,B_n)$ be the expected distance distribution after removing a
random codeword from $\C$.  Given $(x,y) \in \C^2$ with $x \ne y$, the
probability that neither will be removed is $(N-2)/N$. Thus, $B_0=1$ while
for $i \ge 1$,
\[
\begin{split}
B_i &= \frac{1}{N-1} \cdot \frac{N-2}{N}
\,\abs{\set{(x,y) \in \C^2 : \abs{x-y} = i}}\\
& = \frac{N-2}{N-1} \, A_i. 
\end{split}
\]
Under this relationship between $A_i$ and $B_i$, the Delsarte inequalities
\[
\sum_{i=0}^n A_i K_j(i) \ge 0
\]
simply say
\[
K_j(0) + \frac{N-1}{N-2} \sum_{i=1}^n B_i K_j(i) \ge 0
\]
and hence are equivalent to
\[
(N-1)\sum_{i=0}^n B_i K_j(i) \ge K_j(0) = (q-1)^j \binom{n}{j}.
\]
The key observation underlying the proof is that these inequalities on
$(B_i)$ are exactly the Ashikhmin-Simonis inequalities from
Proposition~\ref{prop:AS}; i.e., $(A_i)$ satisfies the Delsarte inequalities
if and only if $(B_i)$ satisfies the Ashikhmin-Simonis inequalities.

Thus, our hypothesis that $(A_i)$ minimizes the $f$-potential energy
\[
\sum_{i=1}^n A_i f(i)
\]
among nonnegative vectors subject to the Delsarte inequalities, $A_0=1$, and
$\sum_i A_i = N$ implies that $(B_i)$ minimizes the expected energy
\[
\sum_{i=1}^n B_i f(i) = \frac{N-2}{N-1} \sum_{i=1}^n A_i f(i)
\]
subject to the Ashikhmin-Simonis inequalities, $B_0=1$, and $\sum_i B_i =
N-1$.

The Ashikhmin-Simonis inequalities apply to all codes of size $N-1$, because
$N$ is a multiple of $q$ and hence $N-1$ is not.  This means no code of size
$N-1$ in $\FF_q^n$ can have lower $f$-potential energy than the expected
energy after removing a random codeword from $\C$. Removing different
codewords might yield non-isomorphic codes, but by linearity of expectation
they must all have the same energy, since none of them can have lower energy
than the average. It follows that for every $c \in \C$, the code
$\C\setminus\set{c}$ minimizes $f$-potential energy among all codes of size
$\abs{\C}-1$.
\end{proof}

In particular, by letting $f$ vary over all completely monotonic functions,
we see that if $\C$ is LP universally optimal, then $\C\setminus\set{c}$ is
universally optimal for all $c \in \C$.  All of these codes
$\C\setminus\set{c}$ must have the same distance distribution, since they
have the same energy for all completely monotonic potential functions, which
span the space of all potential functions.  Thus $\C$ must be distance
regular.

This completes the proof of Theorem~\ref{thm:removept-intro}. We find the
result quite surprising, and the role of the Ashikhmin-Simonis inequalities
in the proof is mysterious.  It is natural to look for other proofs of these
inequalities.  There is a much simpler proof for binary codes (Theorem~5 in
\cite{BBMOS}), which we have been able to generalize to alphabets of prime
power order but no further. The elegant proof of the Delsarte inequalities in
\cite{SV91} can also be adapted to give a proof of the Ashikhmin-Simonis
inequalities, but in fact there is an error in \cite{SV91}: equation ($13''$)
is incorrect and the map $\sigma$ is not well defined for a general alphabet.
When the alphabet has prime power order, the proof works, but we see no way
to salvage it in general.

\section{Further questions and generalizations} \label{sec:further}

In the introduction we mentioned a $N \leftrightarrow 32-N$ symmetry for
codes of size $N$ in $\FF_2^5$.  The unoccupied locations in a code $\C
\subseteq \FF_q^n$ can be viewed as antiparticles, which are subject to
exactly the same forces as the original particles:
\[
\begin{split}
\paren{q^n-|\C|} E_f(\FF_q^n \setminus \C) & = |\C| \, E_f(\C)\\ 
& \quad \phantom{} + \paren{q^n-2|\C|} \sum_{k=1}^n \binom{n}{k} (q-1)^k f(k)
\end{split}
\]
by a simple inclusion-exclusion argument (see also Section~1.3.4 of
\cite{K07} for an essentially equivalent lemma).  Thus, $\FF_q^n \setminus
\C$ is universally optimal if and only if $\C$ is.

Linear programming bounds do not respect this antiparticle symmetry.  For
codes of size greater than $q^n/2$ in $\FF_q^n$, passing to the complement
can strengthen the Delsarte bounds, while for codes of size at most $q^n/2$
one can show that this yields no improvement.  Of course, few important codes
have size greater than $q^n/2$.

Beyond linear programming bounds and antiparticle symmetry, are there
systematic techniques that could be applied?  Semidefinite programming bounds
\cite{S05,GST06} are the most powerful approach known to proving coding
theory bounds. They have been applied to potential energy minimization in
projective space \cite{CW12}, but we have not investigated them in $\FF_q^n$.

Many of our results generalize straightforwardly to metric and cometric
association schemes, i.e., distance-regular graphs under the graph metric
with the ``$Q$-polynomial'' property \cite{DL98}.  There are several
noteworthy omissions, namely the theory of duality (including the definition
of the dual quasicode and Proposition~\ref{prop:quasicode-duality}) and the
results of Section~\ref{sec:remove}.  However, the results of
Sections~\ref{sec:construct} and~\ref{sec:criteria} all generalize if
$(q-1)n/q$ is replaced with the average distance between a pair of randomly
selected points in the graph, with the exception of Lemma~\ref{lem:pd-mds}
(which is needed only for MDS codes) and Proposition~\ref{prop:UO-3sup}.  The
proofs are essentially identical.

We have not attempted to compile an exhaustive list of examples for this more
general theory, along the lines of Table~\ref{tab:codes}, but there are
several interesting applications.  For example, consider the Johnson space of
binary vectors of length $n$ and weight $w$.  Every projective plane of order
$q$ yields an $S(2,q+1,q^2+q+1)$ Steiner system and thus a configuration of
$q^2+q+1$ points in the Johnson space with parameters $(n,w) =
(q^2+q+1,q+1)$.  This configuration is a simplex and a $2$-design, so it is
universally optimal.  The $S(5,8,24)$ Steiner system is a somewhat deeper
example.

The role of duality in association scheme theory is well understood (see
Section~2.6 in \cite{Del73}), and it does not generalize to arbitrary metric
and cometric association schemes.  However, the results of
Section~\ref{sec:remove} are far more mysterious, and we have no idea how far
they might generalize. In particular, we have no conceptual explanation for
why Proposition~\ref{prop:AS} turns out to be exactly what we require to
analyze removing one point from an LP universal optimum.  Any progress on
generalizing either the results or the proof techniques to other association
schemes would be exciting.

\section*{Acknowledgments}

We thank Alexei Ashikhmin, Alexander Barg, and the anonymous referee for
providing valuable feedback and suggestions.

\begin{IEEEbiographynophoto}{Henry Cohn}
is a principal researcher at Microsoft Research New England and adjunct
professor of mathematics at MIT.  His received his Ph.D.\ in mathematics from
Harvard in 2000, under the supervision of Noam Elkies.  His research is in
discrete mathematics, with connections to physics and computer science.
\end{IEEEbiographynophoto}

\begin{IEEEbiographynophoto}{Yufei Zhao}
is a Ph.D.\ student in the Department of Mathematics at the Massachusetts
Institute of Technology. He received his B.Sc.\ from MIT in 2010 and M.A.St.\
from the University of Cambridge in 2011. His research interests include
extremal and probabilistic combinatorics, as well as their applications to
other subjects such as number theory, probability, and geometry. He was a
recipient of the 2013 Microsoft Research PhD Fellowship.
\end{IEEEbiographynophoto}

\vfill

\end{document}